\documentclass[a4paper,11pt]{amsart}

\usepackage[utf8]{inputenc}
\usepackage[dvips]{graphicx}
\usepackage[shortlabels]{enumitem}
\usepackage{amsmath,amssymb,color,enumitem,graphics,hyperref,latexsym,pgfplots,tikz,vmargin}
\usepackage{relsize}
\usepackage{dsfont}

\hypersetup{
unicode=false,          
pdftoolbar=true,        
pdfmenubar=true,        
pdffitwindow=false,     
pdfstartview={FitH},    
pdftitle={Sets with large intersection properties in metric spaces},    
pdfauthor={Felipe Negreira, Emiliano Sequeira},     
pdfsubject={11J83, 28A78},   
pdfkeywords={Diophantine approximations, metric spaces, net measures}, 
pdfnewwindow=true,      
colorlinks=true,       
linkcolor=black,          
citecolor=black,        
filecolor=magenta,      
urlcolor=cyan           
}

\def\diam{{\mathrm{diam}}}

\def\Hh{{\mathcal{H}}}

\def\N{{\mathbb{N}}}

\def\R{{\mathbb{R}}}

\def\H{{\mathbb{H}}}

\newcommand{\supp}{\operatorname{supp}}
\newcommand{\Vol}{\operatorname{Vol}}
\newcommand{\Ker}{\operatorname{Ker}}
\newcommand{\I}{\operatorname{Im}}

\newtheorem{lemma}{Lemma}[section]
\newtheorem{proposition}[lemma]{Proposition}
\newtheorem{theorem}[lemma]{Theorem}

\theoremstyle{definition}

\theoremstyle{remark}
\newtheorem{remark}[lemma]{Remark}

\begin{document}

\title{Relative $L^p$-cohomology and Heintze groups}
\author[E. Sequeira]{Emiliano Sequeira}
\address{Universidad de la República; Montevideo, Uruguay.}
\email{esequeira@cmat.edu.uy}

\keywords{Heintze groups, Quasi-isometry invariant, $L^p$-cohomolgy, $\delta$-hyperbolicity.}
\subjclass[2010]{20F67, 46E30, 53C30.}

\begin{abstract}
We introduce the notion of \textit{relative $L^p$-cohomology} as a quasi-isometry invariant defined for Gromov-hyperbolic spaces, and apply it to the problem of quasi-isometry classification of Heintze groups. More precisely, we explicitly construct non-zero relative $L^p$-cohomology classes on a Heintze group of the form $\R^{n-1}\rtimes_\alpha\R$, which gives a way to prove that the eigenvalues of $\alpha$, up to a scalar multiple, are invariant by quasi-isometries.

In the case of degree $1$ we show a relation between the relative and the classical $L^p$-cohomology.
\end{abstract}

\maketitle

\section{Introduction}

In the context of some classes of metric spaces, $L^p$-cohomology is a quasi-isometry invariant with interesting applications to classification problems. In particular, it has been used to distinguish, up to quasi-isometries, connected homogeneous Riemannian manifolds of negative curvature. 
These manifolds are, due to a result by Heintze \cite{Heintze74}, characterized by the Lie groups of the form $G=N\rtimes_\alpha\R$ equipped with left-invariant metrics, where $N$ is a connected and simply connected nilpotent Lie group and $\alpha$ is a derivation on the Lie algebra of $N$ whose eigenvalues all have positive real part. They are the so called \textit{Heintze groups}.

In the (more general) case of a Riemannian manifold $M$, its $L^p$-cohomology (for $p\geq 1$) is a family of vector topological spaces $L^p H^k(M)$ (with  $k\in\N$)  defined similarly to de Rham cohomolgy but imposing a $L^p$-integrability condition to the forms. The invariance then occurs in the following sense: if $M$ and $N$ are quasi-isometric, then $L^pH^k(M)$ is isomorphic to $L^pH^k(N)$ for every $p$ and $k$. A discrete version of $L^p$-cohomology is also defined considering $\ell^p$-summable cochains on a simplicial complex.

The following result by Pansu gives a way to distinguish two Heintze groups with a particular structure. Its proof uses $L^p$-cohomology. 

\begin{theorem}[\cite{Pa08}]\label{ResultadoPrincipal}
Let $G_1=\R^{n-1}\rtimes_{\alpha_1}\R$ and $G_2=\R^{n-1}\rtimes_{\alpha_2}\R$ be two purely real Heintze groups, i.e. $\alpha_1$ and $\alpha_2$ only have real eigenvalues. If $G_1$ and $G_2$ are quasi-isometric, then there exists $\lambda>0$ such that $\alpha_1$ and $\lambda\alpha_2$ have the same eigenvalues counted with multiplicity. 
\end{theorem}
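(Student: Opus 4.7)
The plan is to invoke the quasi-isometry invariance of relative $L^p$-cohomology at the distinguished ``focal'' boundary point, and to recover the eigenvalues of $\alpha_1$ from the $p$-dependence of $L^p_{\xi_{G_1}} H^1(G_1)$. A Heintze group $G = \R^{n-1} \rtimes_\alpha \R$ has a canonical boundary point $\xi_G \in \partial G$, the end of the $\R$-factor where horospheres contract, characterized intrinsically as the unique point fixed by all left translations of $G$. I will use the standard fact (see \cite{Cor} and references therein) that any quasi-isometry $F : G_1 \to G_2$ between Heintze groups satisfies $\partial F(\xi_{G_1}) = \xi_{G_2}$; combined with Corollary \ref{InvRham}, this yields an isomorphism $L^p_{\xi_{G_1}} H^k(G_1) \simeq L^p_{\xi_{G_2}} H^k(G_2)$ for every $p$ and $k$.

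I would then diagonalize $\alpha_1 = \operatorname{diag}(\lambda_1, \ldots, \lambda_{n-1})$ and write the left-invariant metric as $ds^2 = dt^2 + \sum_i e^{-2\lambda_i t} dx_i^2$ with volume form $e^{-\sigma t}\, dx\, dt$, where $\sigma = \operatorname{tr}(\alpha_1)$; the neighborhoods of $\xi_{G_1}$ in $\overline{G_1}$ are then half-spaces $\{t > T\}$. For each eigenvalue $\lambda_i$ of $\alpha_1$ the next step is to construct a smooth function $f_i : G_1 \to \R$ vanishing on $\{t > T\}$, producing a closed $1$-form $\eta_i = df_i$ that vanishes near $\xi_{G_1}$. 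A first candidate is $f_i(x, t) = \chi(t) u_i(x)$ with $\chi$ a cutoff equal to $1$ on $\{t \leq T-1\}$ and $0$ on $\{t \geq T\}$, and $u_i$ concentrated in the $\lambda_i$-eigendirection. The pointwise norm reads
$$|\eta_i|^2 = \chi(t)^2 \sum_j e^{2\lambda_j t} (\partial_j u_i)^2 + \chi'(t)^2 u_i^2,$$
so the integrability of $\eta_i$ against $e^{-\sigma t}\, dx\, dt$ at $t \to -\infty$ is governed by the exponent $p\lambda_i - \sigma$, yielding the threshold $p > \sigma/\lambda_i$ for a form adapted to the $\lambda_i$-direction. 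Non-triviality of $[\eta_i]$ will then follow by a connectedness argument: any primitive of $\eta_i$ is unique up to a constant, the vanishing condition near $\xi_{G_1}$ forces that constant to be zero, so the only candidate primitive is $f_i$ itself, and a direct computation gives $\|f_i\|_p^p \geq \|u_i\|_p^p \int_{-\infty}^{T-1} e^{-\sigma t}\, dt = +\infty$. Varying $u_i$ within each eigenspace yields linearly independent classes, encoding the multiplicity of $\lambda_i$.

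The isomorphism from the first paragraph then forces the multiset of critical exponents to coincide: $\{\sigma_1/\lambda_i\}_i = \{\sigma_2/\mu_j\}_j$, where $\{\mu_j\}$ are the eigenvalues of $\alpha_2$ and $\sigma_r = \operatorname{tr}(\alpha_r)$. Setting $\lambda = \sigma_1/\sigma_2$, this reads $\{\lambda_i\}_i = \{\lambda \mu_j\}_j$ as multisets, which is exactly the content of the theorem.

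The principal technical difficulty will lie in the construction of the test forms: a naive product ansatz in which $u_i$ is a smooth compactly supported function on $\R^{n-1}$ with non-trivial derivatives in every direction produces a form whose integrability threshold is governed by the smallest eigenvalue $\lambda_1$ alone, not by $\lambda_i$, because at $t \to -\infty$ the sum $\sum_j e^{2\lambda_j t}(\partial_j u_i)^2$ is dominated by the $j=1$ term. Isolating each $\lambda_i$ will require a more careful extension of boundary data adapted to the eigenspace filtration of $\alpha_1$, likely using the parabolic rescaling $y_j = e^{\lambda_j t} x_j$ along the geodesic flow to $\xi_{G_1}$ so that derivatives in the directions of smaller eigenvalues are damped. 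Recovering multiplicities adds a further layer, which most naturally amounts to identifying $L^p_{\xi_{G_1}} \overline{H}^1(G_1)$ with a Besov-type space on $\R^{n-1} = \partial G_1 \setminus \{\xi_{G_1}\}$ carrying the anisotropic visual quasi-metric induced by $\alpha_1$, whose regularity thresholds recover each $\lambda_i$ with the correct multiplicity.
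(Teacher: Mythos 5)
Your reduction to critical exponents of relative $L^p$-cohomology at the focal point $\infty$ is the right general framework, and the first paragraph (compose with an element of $QI(G_2)$ to get a quasi-isometry fixing $\infty$, then apply Corollary \ref{InvRham}) matches the paper. But the core of your argument has a genuine gap: you try to extract the whole multiset $\{\sigma/\lambda_i\}_i$ from degree-one cohomology alone, and that cannot work as stated. For each fixed $p$, Corollary \ref{InvRham} gives an unstructured isomorphism of topological vector spaces $L^p_{\xi_1}H^1(G_1)\simeq L^p_{\xi_2}H^1(G_2)$; it carries no memory of which classes are ``adapted to the $\lambda_i$-eigendirection,'' and these spaces are either zero or infinite-dimensional, so no dimension count distinguishes the eigenvalues. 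The only quasi-isometry invariant you can read off from the family $p\mapsto L^p_\xi H^1(G)$ is the single threshold $\inf\{p: L^p_\xi H^1(G)\neq 0\}$, so the step ``the isomorphism forces $\{\sigma_1/\lambda_i\}_i=\{\sigma_2/\mu_j\}_j$ as multisets'' does not follow. You partially acknowledge this by invoking a Besov-space identification of $L^p_\xi\overline H^1$ at the end, but that is the entire difficulty, it concerns reduced cohomology (a different object from the one Theorem \ref{invarianza} controls), and it is not carried out. The same is true of the test-form construction: as you yourself observe, the product ansatz only detects $\lambda_1$, a function depending solely on $x_i$ is not $L^p$ in the transverse variables, and the proposed ``parabolic rescaling'' fix is left as a sketch.

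The paper resolves exactly this by extracting \emph{one} critical exponent per cohomological \emph{degree} rather than $n-1$ exponents from a single degree. Theorem \ref{calculoLp} shows that for $k=2,\ldots,n-1$ the space $L^p_\infty H^k(G)$ vanishes precisely for $p>\mathrm{tr}(\alpha)/w_{k-1}$, where $w_j=\lambda_1+\cdots+\lambda_j$ is the partial sum of the $j$ smallest eigenvalues: vanishing above the threshold comes from the explicit primitive $\vartheta=-\int_0^{+\infty}\varphi_t^*\iota_V\omega\,dt$ along the vertical flow (convergent in $L^p$ because $\omega$ dies near $\infty$), and non-vanishing below it comes from the explicit class $\omega=d\bigl(f(x)g(t)\,dx_1\wedge\cdots\wedge dx_{k-1}\bigr)$ detected by pairing with $\beta=dx_k\wedge\cdots\wedge dx_{n-1}$ via Stokes' theorem. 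The resulting invariants $\mathrm{tr}(\alpha)/w_j$, $j=1,\ldots,n-2$, determine all the $\lambda_i$ with multiplicity up to a common scalar by taking successive differences $w_j-w_{j-1}=\lambda_j$. If you want to salvage your approach, you should replace your family of degree-one forms by these higher-degree classes; the partial sums $w_j$, not the individual eigenvalues, are what the vanishing pattern of $L^p$-cohomology actually sees.
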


A more general version of Theorem \ref{ResultadoPrincipal} is proved in \cite{CS} using all previously known machinery \cite{C,LX,Pa89a,Xie14}. There exists a complete quasi-isometry characterization of purely real Heinze groups of the form $\R^{n-1}\rtimes_\alpha\R$, which also implies Theorem \ref{ResultadoPrincipal} (see \cite{Xie14}).

The strategy used by Pansu for proving Theorem \ref{ResultadoPrincipal} is to show that $L^pH^k(G_i)$ (with $k\geq 2$ and $i=1,2$) is zero for $p$ bigger than a critical exponent $p_k(i)$, and that it is not zero if $p$ belongs to an interval of the form $(p_k(i)-\epsilon,p_k(i))$. The quasi-isometry between $G_1$ and $G_2$ implies $p_k(1)=p_k(2)$ for every $k$. Then the result is obtained using the relation between critical exponents and the eigenvalues of the derivations. The non-vanishing of the $L^p$-cohomology involves the explicit construction of non-zero elements, which is a difficult problem from a technical point of view. The goal of this work is to show a shortcut at this point, which consists of slightly modifying the definition of $L^p$-cohomology in order to obtain a family of topological vector spaces with which it is easier to work. In addition, we do it in a self-contained way.

In some simple examples, as the real hyperbolic space $\H^n=\R^n\rtimes_{Id}\R$, the construction of non-zero elements in the $L^p$-cohomology is not difficult by using duality arguments, developped mainly in \cite{GKS2,GT10,Pa08}. (Indeed, one can easily find a differential form with a simple expression, like \eqref{FormaNoNula}, that is not zero in the $L^p$-cohomology for this particular case.) However, the Sobolev inequalities provide an obstruction to apply the same method to more complicaded cases. The main obstruction seems to be the behaviour of the forms on a neighborhood of a special point on the boundary at infinity of the groups. In order to avoid it, we propose an alternative definition of $L^p$-cohomology in the case of Gromov-hyperbolic spaces: the \textit{relative $L^p$-cohomology}. It broadly consists of taking only the differential forms that vanish on a neighborhood of some fixed point on the boundary at infinity. We denote by $L^pH^k(M,\xi)$ the $k^{th}$-space of relative $L^p$-cohomology of the pair $(M,\xi)$, where $M$ is a Gromov-hyperbolic Riemannian manifold and $\xi\in\partial M$. As in the classic version, this gives a quasi-isometry invariant under certain conditions. More precisely, we have the following result: 

\begin{theorem}\label{invarianza} Let $M$ and $N$ be two complete, uniformly contractible and Gromov-hyperbolic Riemannian manifolds with bounded geometry, and $\xi$ a fixed point in $\partial M$. If $F:M\to N$ is a quasi-isometry, then for every $p\in[1,+\infty)$ and $k\in\N$ there is an isomorphism of topological vector spaces between $L^pH^k(M,\xi)$ and $L^pH^k(N,F(\xi))$.
\end{theorem}

A metric space is \textit{uniformly contractible} if there is a function $\phi:[0,+\infty)\to [0,+\infty)$ such that every ball $B(x,r)=\{x'\in X : |x'-x|<r\}$ is contractible in the ball $B(x,\phi(r))$. We say that a Riemannian manifold has \textit{bounded geometry} if it has positive injectivity radius and its sectional curvature is uniformly bounded from above and below. 

In order to prove Theorem \ref{invarianza} we show the invariance under quasi-isometries of an equivalent discrete version of the relative $L^p$-cohomology. Once the invariance is proven, we study the vanishing and non-vanishing of the $L^p$-cohomology of a group $\R^{n-1}\rtimes_\alpha\R$ relative to a special point $\infty$ on its boundary at infinity:

\begin{theorem}\label{TeoremaLpHeintzeCalculo}
Let $G=\R^{n-1}\rtimes_{\alpha}\R$ be a purely real Heintze group, where $\alpha$ has positive eigenvalues $\lambda_1\leq\cdots\leq\lambda_{n-1}$. For $k=1,\ldots,n-1$ we write $w_k=\lambda_1+\cdots+\lambda_k$. Then 
\begin{itemize}
    \item $L^pH^k(G,\infty)=0$ for $k\geq 2$ and $p>\frac{\mathrm{tr}(\alpha)}{w_{k-1}}$;
    \item $L^p H^k(G,\infty)\neq 0$ for $k\geq 2$ and $\frac{\mathrm{tr}(\alpha)}{w_k}<p\leq \frac{\mathrm{tr}(\alpha)}{w_{k-1}}$; and
    \item $L^1H^k(G,\infty)\neq 0$ for $p>\frac{\mathrm{tr}(\alpha)}{w_1}$.
\end{itemize}
\end{theorem}

If two Heintze groups $G_1$ and $G_2$ as above are quasi-isometric, then there exists a quasi-isometry that sends the point $\infty\in\partial G_1$ to $\infty\in\partial G_2$ (see Remark \ref{lemaCornulier}). Combining this fact with Theorem \ref{invarianza} and Theorem \ref{TeoremaLpHeintzeCalculo}, we easily get Theorem \ref{ResultadoPrincipal}. Indeed, if $\R^{n-1}\rtimes_{\alpha_1}\R$ and $\R^{n-1}\rtimes_{\alpha_2}\R$ are quasi-isometric, then $\alpha_1$ and $\frac{tr(\alpha_1)}{tr(\alpha_2)}\alpha_2$ have the same eigenvalues.

Finally, in the case of degree $1$ we use the identification between $L^p$-cohomology and Besov spaces  given in \cite{BP} to prove the following result: 

\begin{theorem}\label{TeoGr1}

Let $M$ be a complete Gromov-hyperbolic Riemannian manifold with bounded geometry and $\xi\in\partial M$. Then for every $p\geq 1$ there exists a continuous embedding $L^p H^1(M,\xi)\hookrightarrow L^pH^1(M)$. Moreover, if $p>\mathrm{Cdim}_{AR}(\partial M)$, the image of this embedding is dense. 
\end{theorem}

In the previus theorem, $\mathrm{Cdim}_{AS}(\partial M)$ denotes the Ahlfors regular conformal dimension of $\partial M$, that is, the infimum of dimensions of all Ahlfors regular metrics that are cuasi-symmetric equivalent to any visual metric on $\partial M$. See for example \cite[Chapter 7]{MT} for more details. 

If $G=N\rtimes_\alpha\R$ is a Heintze group, then it is known that $L^pH^1(G)=0$ for every $p\leq \mathrm{tr}(\alpha)/w_1$, and $L^pH^1(G)\neq 0$ for $p>\mathrm{tr}(\alpha)/w_1$ (see for example \cite[Theorem 4]{Pa89a} or \cite[Corollary 1.6]{C}). Assuming that, the first part of Theorem \ref{TeoGr1} allows to conclude that $L^pH^1(G,\xi)=0$ for every $p\leq \mathrm{tr}(\alpha)/w_1$ and any $\xi\in\partial G$, which completes Theorem \ref{TeoremaLpHeintzeCalculo}. 



\subsection{Notation and conventions}

We denote by $|x-y|$ the distance between two points $x$ and $y$ belonging to any metric space. In the last section we also use the notation $d(x,y)$ to refer to a visual metric.

A quasi-isometry of constants $\lambda\geq 1$ and $\epsilon\geq 0$ from a metric space to another is a map $F:X\to Y$ such that
\begin{enumerate}
    \item[(a)] $\lambda^{-1}|x-x'|-\epsilon\leq |F(x)-F(x')|\leq \lambda|x-x'|+\epsilon\ \text{ for every }x,x'\in X$; and
    \item[(b)] for every $y\in Y$ there exists $x\in X$ such that $|F(x)-y|\leq \epsilon$.
\end{enumerate}

If $X$ is a Gromov-hyperbolic space we write $\partial X$ to mean its boundary at infinity. If $F:X\to Y$ is a quasi-isometry between two Gromov-hyperbolic spaces, the induced homeomorphism between their boundaries is denoted also by $F$. We write $\overline{F}$ to mean a quasi-inverse of $F$, that is, a quasi-isometry from $Y$ to $X$ such that $F\circ \overline{F}$ and $\overline{F}\circ F$ are at bounded uniform distance from the identity. We refer to \cite{GH} for details about quasi-isometries and Gromov-hyperbolic spaces.

By a \textit{measurable $k$-form}, or simply \textit{$k$-form}, on a smooth manifold $M$ we mean a function 
$$\omega:M\to \bigcup_{x\in M}\Lambda^k(T_x M),\ x\mapsto \omega_x\in \Lambda^k(T_x M)$$ 
whose coefficients with respect to any parametrization of $M$ are measurable functions. 
Here $\Lambda^k(T_x M)$ denotes the space of alternating $k$-linear maps on the tangent space $T_x M$. If its coefficients are in addition smooth we say that $\omega$ is a \textit{differential $k$-form}. The space of differential $k$-forms on $M$ is denoted by $\Omega^k(M)$.

Given two real functions $f$ and $g$ defined in the same domain we write $f\preceq g$ if there exists a constant $C>0$ such that $f\leq Cg$, and $f\asymp g$ if $f\preceq g$ and $g\preceq f$.

When we talk about \textit{cochain complexes}, \textit{cochain maps}, \textit{homotopies} and \textit{homotopy equivalences} between cochain complexes, we will do it in a continous sense, that is, all maps involved are continuous. 

\section{Relative $L^p$-cohomology}

Consider a Riemannian manifold $M$ of dimension $n$. We denote by $\|\ \|_x$ the Riemannian norm at the point $x\in M$ and by $dV$ the volume on $M$. 

If $\omega\in\Omega^k(M)$ and $p\geq 1$ we set
$$\|\omega\|_{L^p}=\left(\int_M |\omega|_x dV(x)\right)^{\frac{1}{p}},$$
where $|\omega|_x=\sup\bigl\{|\omega_x(v_1,\ldots,v_k)|:v_i\in T_xM\text{ with }\|v_i\|_x=1\text{ for every }i=1,\ldots,k\bigr\}.$ Then we consider $L^p\Omega(M)$ as the space of differential $k$-forms $\omega$ on $M$ with
$$|\omega|_{L^p}=\|\omega\|_{L^p}+\|d\omega\|_{L^p}<+\infty,$$ 
where $d$ is the usual exterior derivative. Observe that $d$ is continuous with this norm.

Since $\bigl(L^p\Omega(M),|\ \ |_{L^p}\bigr)$ is not complete we take its completion $L^pC^k(M)$ and extend continuously the exterior derivative. The \textit{de Rham $L^p$-cohomolgy of $M$} is the cohomology of the cochain complex $\bigl(L^pC^*(M),d\bigr)$. Observe that $L^p\Omega(M)$ is continuously included in the space of $L^p$-integrable (not necesarily smooth) $k$-forms on $M$ up to almost everywhere zero $k$-forms, denoted by $L^p(M,\Lambda^k)$, which is naturally equipped with the norm $\|\ \|_{L^p}$. Since the second space is complete, the elements of $L^pC^k(M)$ can be seen as elements of $L^p(M,\Lambda^k)$. Indeed, an equivalent definition of de Rham $L^p$-cohomology can be done in terms of $L^p$-integrable forms and weak exterior derivative (see for example \cite{GKS2,GT10}).

If $M$ is in addition Gromov-hyperbolic and complete we fix a point $\xi$ on its boundary at infinity $\partial M$ and introduce the \textit{relative de Rham $L^p$-cohomology} of the pair $(M,\xi)$ in the following way: Consider $L^pC^k(M,\xi)$ the subspace of $L^pC^k(M)$ consisting of all elements that vanish (almost everywhere) on a neighborhood of $\xi$ in $\overline{M}=M\cup\partial M$. It is clear that $d(L^pC^k(M,\xi))\subset L^pC^{k+1}(M,\xi)$ for every $k$, then we take the family of spaces
$$L^p H^k(M,\xi)=\frac{\mathrm{Ker}\ d|_{L^p C^k(M,\xi)}}{\mathrm{Im}\ d|_{L^p C^{k-1}(M,\xi)}}$$
endowed with the topology induced by $|\ \ |_{L^p}$.

We can define a version of relative $L^p$-cohomology in the context of simplicial complexes. To this end suppose that $X$ is a finite-dimensional simplicial complex with a length distance such that there exist a constant $C\geq 0$ and a function $N:[0,+\infty)\to\N$ satisfying
\begin{enumerate}
\item[(c)] all simplices in $X$ have diameter smaller than $C$; and
\item[(d)] every ball of radius $r$ intersects at most $N(r)$ simplices.
\end{enumerate}
We say that $X$ has \textit{bounded geometry} if it satisfies all these properties. The set of $k$-simplices of $X$ will be denoted by $X^{(k)}$.

For a fixed real number $p\geq 1$ we consider the \textit{simplicial} $\ell^p$\textit{-cohomology} of $X$ as the cohomology of $\bigl(\ell^p\bigl(X^{(*)}\bigr),\delta\bigr)$, where $\delta:\ell^p\bigl(X^{(k)}\bigr)\to \ell^p\bigl(X^{(k+1)}\bigr)$ is the usual coboundary operator. It is not difficult to see that it is well-defined and continuous using properties $(c)$ and $(d)$.

If $X$ is Gromov-hyperbolic and $\xi\in\partial X$, we consider $\ell^p\bigl(X^{(k)},\xi\bigr)$ the subspace of $\ell^k\bigl(X^{(k)}\bigr)$ consisting of all $k$-cochains that vanish on a neighborhood of $\xi$ in $\overline X$. (We say that $\theta\in\ell^p\bigl(X^{(k)}\bigr)$ is zero or vanishes on $U\subset\overline{X}$ if for every $k$-simplex $\sigma\subset U$ we have $\theta(\sigma)=0$.) In this case we also have  $\delta\bigl(\ell^p\bigl(X^{(k)},\xi\bigr)\bigr)\subset\ell^p\bigl(X^{(k+1)},\xi\bigr)$, then the \textit{relative simplicial $\ell^p$-cohomology of the pair $(X,\xi)$} is the family of topological vector spaces
$$\ell^p H^k(X,\xi)=\frac{\Ker \delta|_{\ell^p(X^{(k)},\xi)}}{\I \delta|_{\ell^p(X^{(k-1)},\xi)}}.$$
A de Rham-type theorem works for $L^p$-cohomology in the classical sense (\cite{G,GKS88,Pa95}), so both de Rham and simplicial versions are isomorphic for a Riemannian manifold with certain triangulation. We will see later that it is also true in the relative case. 

\begin{remark}
It is usual to consider the \textit{reduced $L^p$-cohomology} in both cases, defined by taking the quotient by the closure of the exact forms or cochains. This has the advantage of providing a family of Banach spaces that remain invariant by quasi-isometries. However, this does not make much sense in our case because the spaces of relative forms or cochains are not Banach spaces.
\end{remark}

\begin{remark}
It is possible to construct examples for which the reduce $L^p$-cohomology depends on the point on the boundary. For example one can consider $X$ the space that results from gluing $\H^n$ and $[0,+\infty)$ by identifying $0$ with any point of $\H^n$. If $\xi_0\in \partial X$ is the point represented by the geodesic ray $[0,+\infty)$ one can see that $dim\bigl(L^pH^1(X,\xi_0)\bigr)=1$, while $dim\bigl(L^pH^1(X,\xi)\bigr)=\infty$ if $\xi\neq \xi_0$ .
\end{remark}

\subsection{Quasi-isometry invariance}

We want to prove the following theorem:  

\begin{theorem}\label{invarianzaSimplicial} Let $X$ and $Y$ be two uniformly contractible and Gromov-hyperbolic simplicial complexes with bounded geometry, and $\xi$ a fixed point in $\partial X$. If $F:X\to Y$ is a quasi-isometry, then for every $p\in[1,+\infty)$ and $k\in\N$ there is an isomorphism of topological vector spaces between $\ell^pH^k(X,\xi)$ and $\ell^pH^k(Y,F(\xi))$.
\end{theorem}

The proof of Theorem \ref{invarianzaSimplicial} will be an adaptation of the proof of Theorem 1.1 in \cite{BP}.\\

Observe that every $\theta\in \ell^p\bigl(X^{(k)}\bigr)$ has a natural linear extension $\theta:C_k(X)\to\R$, where
$$C_k(X)=\left\{\sum_{i=1}^m t_i\sigma_i :  t_1,\ldots,t_m\in\R, \sigma_1,\ldots,\sigma_m\in X^{(k)}\right\}$$
is the space of $k$-chains on $X$. The \textit{support} of $c=\sum_{i=1}^m t_i\sigma_i$ in $C_k(X)$ (with $t_i\neq 0$ for all $i=1,\ldots,m$) is $|c|=\{\sigma_1,\ldots,\sigma_m\}$. We also define the \textit{uniform norm} and the \textit{length} of $c$ by
$$\|c\|_\infty=\max\{|t_1|,\ldots,|t_m|\}\text{, and } \ell(c)=m.$$

We will prove Theorem \ref{invarianzaSimplicial} in a similar way as in \cite{BP}. For this purpose we consider the following lemmas, in which we assume the setting of the theorem.

\begin{lemma}[\cite{BP}]\label{lema1} 
The quasi-isometry $F:X\to Y$ induces a family of maps $c_F:C_k(X)\to C_k(Y)$ at bounded uniform distance from $F$ which verify $\partial c_F(\sigma)=c_F(\partial\sigma)$ for every $\sigma\in X^{(k)}$. Moreover, for every $k\in\N$ there exist constants $N_k$ and $L_k$ (depending only on $k$ and the geometric data of $X,Y$ and $F$) such that 
$\|c_F(\sigma)\|_\infty\leq N_k$ and $\ell(c_F(\sigma))\leq L_k$ for every $\sigma\in X^{(k)}$ 
\end{lemma}

\begin{lemma}[\cite{BP}]\label{lema2}
If $G:X\to Y$ is another quasi-isometry at bounded uniform distance from $F$, then there exists an homotopy $h:C_k(X)\to C_{k+1}(Y)$ between $c_F$ and $c_G$. That is,
\begin{equation*}
\left\{
\begin{array}{cc}
h(v)=c_F(v)-c_G(v) & \text{ if } v\in X^{(0)},\\
\partial h(\sigma)+h(\partial\sigma)=c_F(\sigma)-c_G(\sigma) & \text{ if }\sigma\in X^{(k)},\ k\geq 1.
\end{array}
\right.
\end{equation*}
Furthermore, $\|h(\sigma)\|_\infty$ and $\ell(h(\sigma))$ are uniformly bounded by constants $N'_k$ and $L'_k$ that depend only on the geometric data of $X,Y,F$ and $G$.
\end{lemma}

\begin{proof}[Proof of Theorem \ref{invarianzaSimplicial}]

We consider the \textit{pull-back} of a k-cochain $\theta\in\ell^p\bigl(Y^{(k)},{F(\xi)}\bigr)$ as the composition $F^*\theta=\theta\circ c_F$ (it depends on the choice of $c_F$). 
Observe first that $F^*$ is well-defined and continuous from $\ell^p\bigl(Y^{(k)},F(\xi)\bigr)$ to $\ell^p\bigl(X^{(k)}\bigr)$. Indeed, Jensen's inequality allows to prove that for $\theta\in\ell^p\bigl(Y^{(k)},F(\xi)\bigr)$,
\begin{align*}
\|F^*\theta\|_{L^p}^p=\sum_{\sigma\in X^{(k)}}|\theta(c_F(\sigma))|^p \leq N_k^pL_k^{p-1}\sum_{\sigma\in X^{(k)}}\sum_{\tau\in |c_F(\sigma)|}|\theta(\tau)|^p.
\end{align*}
Since $F$ is a quasi-isometry and the distance between $c_F(v)$ and $F(v)$ is uniformly bounded for all $v\in X^{(0)}$, we can find a constant $C_k$ such that if $dist(\sigma_1,\sigma_2)>C_k$, then $c_F(\sigma_1)\cap c_F(\sigma_2)=\emptyset$. Using the bounded geometry of $X$ we have that every $\tau\in Y^{(k)}$ satisfies $\tau\in |c_F(\sigma)|$ for at most $N(C+C_k)$ simplices $\sigma\in X^{(k)}$. This implies that
\begin{align*}
\|F^*\theta\|_{L^p}^p &\leq N_k^p L_k^{p-1} N(C+C_k) \sum_{\tau\in Y^{(k)}}|\theta(\tau)|^p
= N_k^p L_k^{p-1} N(C+C_k) \|\theta\|^p_{\ell^p}.
\end{align*}

Let us prove that $F^*\theta$ is zero on some neighborhood of $\xi$ for any fixed $\theta$ in $\ell^p\bigl(Y^{(k)},F(\xi)\bigr)$. Assume that $\theta$ is zero on $V$, a neighborhood of $F(\xi)$ in $\overline{Y}$. If $\sigma\in X^{(k)}$ and $v$ is one of its vertex, then
\begin{equation}\label{distHausdorff}
dist_H\bigl(c_F(\sigma),F(v)\bigr)\leq dist_H\bigl(c_F(\sigma),c_F(v)\bigr)+dist_H\bigl(c_F(v),F(v)\bigr),    
\end{equation}
where $dist_H$ denotes the Hausdorff distance. By construction of $c_F$ the distance (\ref{distHausdorff}) is uniformly bounded by a constant $D_k$. We define $W$ as the closure in $\overline{Y}$ of the set $$\bigl\{y\in Y: dist(y,V^c\cap Y)>D_k\bigr\}.$$ 
Since $F$ is a quasi-isometry, there exists $U\subset\overline{X}$ a neighbourhood of $\xi$ such that $F(U)\subset W$. For every $k$-simplex $\sigma\subset U$ we have $c_F(\sigma)\subset V$ and hence $F^*\theta(\sigma)=0$. We conclude that $F^*\theta$ vanishes on $U$.

By definition we have $\delta F^*=F^*\delta$, which implies that $F^*$ defines a map in cohomology, denoted by $F^{\#}:\ell^pH^k(Y,{F(\xi)})\to \ell^p H^k(X,\xi)$. We have to prove that $F^{\#}$ is an isomporphism.\\

\underline{Claim}: If $F,G:X\to Y$ are two quasi-isometries at bounded uniform distance, then
$F^{\#}=G^{\#}$.\\

We need a family of continuous linear maps $H_k:\ell^p\bigl(Y^{(k)},{F(\xi)}\bigr)\to \ell^p\bigl(X^{(k-1)},\xi\bigr)$ such that
\begin{equation*}
\left\{
\begin{array}{cc}
    F^*\theta-G^*\theta=H_1\delta\theta &\text{ if }\theta\in \ell^p\bigl(Y^{(0)},{F(\xi)}\bigr),  \\
    F^*\theta-G^*\theta=H_{k+1}\delta\theta+\delta H_k\theta &\text{ if }\theta\in \ell^p\bigl(Y^{(k)},F(\xi)\bigr),\ k\geq 1 
\end{array}
\right.
\end{equation*}

It is easy to verify that both conditions are satisfied by the map
$$H_k\theta:X^{(k)}\to\R,\  H_k\theta(\sigma)=\theta(h(\sigma)),$$ 
where $h$ is the map given by Lemma \ref{lema2}. Using the same argument as for $F^*$ we can prove that $H_k\theta$ is in $\ell^p \bigl(X^{(k-1)}\bigr)$ and $H_k$ is continuous. To see that $H_k\theta$ vanishes on some neighborhood of $\xi$ (and finish the proof of the claim) observe that $h(\sigma)$ have uniformly bounded length, which implies that $dist_H\bigl(c_F(\sigma),h(\sigma)\bigr)$ is uniformly bounded.\\ 

As a consequence of the claim we have that $F^{\#}$ does not depend on the choice of $c_F$. Moreover, if $T:Y\to Z$ is another quasi-isometry, a possibe choice of the function $c_{T\circ F}$ is the composition $c_T\circ c_F$. In this case $(T\circ F)^*=F^*\circ T^*$ and hence $(T\circ F)^{\#}=F^{\#}\circ T^{\#}$.

Finally, if $\overline{F}:Y\to X$ is a quasi-inverse of $F$, then by the claim $(F\circ \overline{F})^{\#}$ and $(\overline{F}\circ F)^{\#}$ are the identity in relative cohomology. Since $(F\circ \overline{F})^{\#}= \overline{F}^{\#}\circ F^{\#}$ and $(\overline{F}\circ F)^{\#}=F^{\#}\circ\overline{F}^{\#}$, the statement follows.

\end{proof}

\subsection{Equivalence between simplicial and de Rham relative $L^p$-cohomology}

Let $M$ be a complete Gromov-hyperbolic Riemannian manifold of dimension $n$ and $\xi\in\partial M$. We assume that $M$ admits a triangulation $X_M$ with bounded geometry such that every simplex is uniformly biLipschitz diffeomorphic to the standar Euclidean simplex of the same dimension. The existence of such triangulation is guaranteed if, for example, $M$ has bounded geometry, that is, it has bounded curvature and positive injectivity radius (see \cite{A}).

For every vertex $v$ of $X_M$ we define its \textit{open star} $U(v)$ as the interior of the union of all simplices containing $v$. Observe that $\mathcal{U}=\Bigl\{U(v):v\in X^{(0)}_M\Bigr\}$ is an open covering of $M$ satisfying that every nonempty intersection $U(v_1)\cap\cdots\cap U(v_k)$ is biLipschitz diffeomorphic to the unit ball in $\R^n$ with uniform Lipschitz constant. The simplicial complex $X_M$ can be seen as the nerve of $\mathcal{U}$, that is, we can identify $X_M^{(k)}$ with the set $$\bigl\{(U_1,\ldots,U_k) : U_1\cap \cdots\cap U_k\neq\emptyset \text{ and } U_1,\ldots U_k\in\mathcal{U}\bigr\}.$$

Under the above considerations, we will prove the following result:

\begin{theorem}\label{equivalencia}
Let $M$, $X_M$ and $\xi\in \partial$ be as above, then for every $k\in\N$ and $p\in[0,+\infty)$ the spaces $L^pH^k(M,\xi)$ and $\ell^p H^k(X_M,\xi)$ are isomorphic.
\end{theorem} 

It is clear that Theorem \ref{invarianza} follows from Theorem \ref{invarianzaSimplicial} and Theorem \ref{equivalencia}. To prove Theorem \ref{equivalencia} we will use two lemmas which appear in \cite{Pa95}. We give their complete and more detailed proofs.

By a \textit{bicomplex} we mean a family of topological vector spaces $\bigl\{C^{k,\ell}\bigr\}_{k,\ell\in\N}$ together with two families of continuous linear operators $d'=d'_k:C^{k,\ell}\to C^{k+1,\ell}$ and $d''=d''_\ell:C^{k,\ell}\to C^{k,\ell+1}$ such that $d'\circ d'=0$, $d''\circ d''=0$ and $d'\circ d''+d''\circ d'=0$. 

\begin{lemma}[Lemma 5,\cite{Pa95}]\label{weyl} 
Let $\bigl(C^{*,*}, d', d''\bigr)$ be a bicomplex . Suppose that for every $\ell \in \mathbb N$
the complex $\bigl(C^{*, \ell}, d'\bigr)$ retracts to the subcomplex $\bigl(E^\ell := \mathrm{Ker}\ d'\vert _{C^{0,\ell}}\to 0 \to 0 \to \cdots\bigr)$. Then the complex $(D ^*, \delta)$, defined
by 
$$D^m = \bigoplus _{k + \ell = m} C^{k, \ell}\text{ and }\delta = d' + d'',$$ 
is homotopically equivalent to $(E^*, d'')$.
\end{lemma}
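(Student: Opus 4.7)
The plan is to apply the basic perturbation lemma of homological perturbation theory to row-wise strong deformation retract (SDR) data, treating $d''$ as a perturbation of $d'$ on the total complex. The hypothesis provides, for each $\ell$, an inclusion $i_\ell : E^\ell \hookrightarrow C^{0,\ell}$, a retraction $r_\ell : C^{0,\ell} \to E^\ell$ with $r_\ell i_\ell = \mathrm{id}$, and a chain homotopy $s_\ell : C^{k,\ell} \to C^{k-1,\ell}$ witnessing the retraction of $(C^{*,\ell}, d')$ onto $E^\ell$ in degree $0$. Assembling these row-wise yields an SDR $(I_0, R_0, S)$ from $(D^*, d')$ onto $(E^*, 0)$: here $I_0$ is the inclusion of $E^\ell$ into $C^{0,\ell}\subset D^\ell$, $R_0$ equals $r_\ell$ on $C^{0,\ell}$ and vanishes on $C^{k,\ell}$ for $k\geq 1$, and $S$ acts as $s_\ell$ on each summand $C^{k,\ell}$. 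A direct check using the row identities confirms that this is a genuine SDR.

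Next I turn on the perturbation $d''$. The crucial observation is that $Sd''$ shifts bidegree by $(-1, +1)$ while preserving total degree, so for any $c\in C^{k,\ell}$ at most $k+1$ iterations of $Sd''$ annihilate $c$; consequently the formal series $\Sigma := \sum_{n\geq 0}(Sd'')^n$ terminates in each bidegree and defines an honest operator on $D^*$. The basic perturbation lemma then produces new SDR data $(I, R, H)$ for $(D^*, d'+d'')$ onto $(E^*, \bar d)$, with $I = I_0$ (because $S$ kills $C^{0,\ell}$ and hence $\Sigma I_0 = I_0$), $R = R_0\circ \Sigma$, $H = S\circ\Sigma$, and $\bar d = R_0\,d''\,I$. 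The anticommutation $d'd'' + d''d' = 0$ forces $d''$ to preserve $E^\ell = \ker d'\cap C^{0,\ell}$, so for $e\in E^\ell$ one has $d''e\in E^{\ell+1}$ and therefore $\bar d\, e = r_{\ell+1}(d''e) = d''e$. Thus $\bar d$ is precisely $d''|_{E^*}$, matching the target of the claim.

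The main obstacle is the algebraic bookkeeping: one must verify, starting from the row-wise homotopy relation (which carries an extra $i_\ell r_\ell$ correction only at column zero) and the anticommutation of $d'$ and $d''$, that the perturbed operators indeed satisfy the chain-map and homotopy identities $\delta I = I\bar d$, $R\delta = \bar d R$, $R I = \mathrm{id}_{E^*}$ and $I R - \mathrm{id}_{D^*} = \delta H + H\delta$. There is no analytic subtlety whatsoever, since every series occurring is a finite sum once restricted to a fixed bidegree $(k,\ell)$; the only delicate point is tracking signs through the iterated compositions with $d''$, which can be handled by induction on $k$.
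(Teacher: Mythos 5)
Your argument is correct in substance, and it is essentially the homological--perturbation--theory packaging of what the paper does by hand, so it is worth saying exactly where the two differ. The paper never invokes the Basic Perturbation Lemma: it introduces the column filtration $D^{*}_{[K]}$ (keeping $C^{k,\ell}$ for $k<K$, $\mathrm{Ker}\ d'$ at $k=K$, and $0$ beyond), notes that $D^{m}=D^{m}_{[K]}$ for $K\ge m$ and $D^{*}_{[0]}=E^{*}$, and shows that each inclusion $D^{*}_{[K-1]}\hookrightarrow D^{*}_{[K]}$ is a continuous deformation retract via the single operator $b$, equal to $-(d''\circ h'+h'\circ d'')$ on the columns $k\ge 1$ and to $i\circ\varphi$ on column $0$, which satisfies $\delta\circ h'+h'\circ\delta=\mathrm{Id}-b$ and lowers the filtration. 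Your series $\Sigma=\sum_{n}(Sd'')^{n}$ and the transferred data $(I,R,H)$ are what one obtains by composing these one-step retractions (your $Sd''=h'd''$ is one of the two summands of $-b$, and the composite $b^{K}$ landing in $E^{*}$ expands into the same staircase terms), and the termination of your series is the same finiteness of $D^{m}=\bigoplus_{k+\ell=m}C^{k,\ell}$ that makes the paper's filtration stabilize in each total degree. What the perturbation lemma buys you is closed formulas and no filtration bookkeeping; what the paper's route avoids are the side conditions $R_{0}S=0$ and $S^{2}=0$ that the standard statement of the Basic Perturbation Lemma imposes on the input SDR and which are \emph{not} automatic here (only $SI_{0}=0$ is free, since $h'$ vanishes on column $0$). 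So you should either normalize the homotopy $S$ first by the standard replacements, or cite a version of the perturbation lemma that outputs a homotopy equivalence without side conditions --- a homotopy equivalence is all the lemma asserts, so either fix suffices. Finally, since the lemma is applied to complexes of topological vector spaces, the transferred maps must be continuous; this does hold in your setup because in each total degree every series is a finite sum of composites of the continuous operators $h'$, $\varphi$ and $d''$, but it should be stated explicitly, as the paper does in its last line.
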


\begin{proof}
For every $K\in\N$ let $\bigl(C^{*,*}_{[K]},d',d''\bigr)$ be the subcomplex of $\bigl(C^{*,*},d',d''\bigr)$ defined by 
$$C^{k,\ell}_{[K]}=\left\{\begin{array}{ccc} C^{k,\ell}\ &\text{if }k<K\\
\mathrm{Ker}\ d'|_{C^{k,\ell}}\ &\text{if }k=K\\
0\ &\text{if }k>K\end{array}\right..$$
For every $m\in\N$ let $D^{m}_{[K]}=\bigoplus_{k+\ell=m}C^{k,\ell}_{[K]}$. It is clear that $D^{*}_{[K]} \subset D^{*}_{[K+1]}$ for every $K$ and $\cup _{K \ge 0} D^{*}_{[K]} = D^{*}$.
Moreover, by definition of $E^*$, we have $D^{*}_{[0]} = E^*$. Therefore, to prove the lemma, it will suffice to show
that $D^{*}_{[K]}$ retracts to $D^{*}_{[K-1]}$ for every $K \ge 1$.

In order to simplify the notation we set
$$\mathcal{C}_0=\bigoplus _{\ell \ge 0} C^{0, \ell},\ \ \mathcal{C}_1=\bigoplus _{k\ge 1, \ell \ge 0} C^{k, \ell},\ \text{ and }\mathcal{E}=\bigoplus _{\ell \ge 0} E ^\ell.$$
We also write $\mathcal{C}=\mathcal{C}_0\cup\mathcal{C}_1$. By assumption, for every $\ell \in \mathbb N$,
the complex $\bigl(C^{*, \ell}, d'\bigr)$ retracts to the subcomplex $\bigl(E^\ell\to 0\to 0\to\cdots\bigr)$. Thus there exist continuous operators 
$h : \mathcal{C}_1 \to \mathcal{C}$,
 $\varphi : \mathcal{C}_0 \to \mathcal{E}$, satisfying $h\bigl(C^{k,\ell}\bigr)\subset C^{k-1,\ell}$ and $\varphi\bigl(C^{0,\ell}\bigr)\subset E^\ell$, such that
\begin{enumerate}
\item[(e)] $h \circ d' = \mathrm{Id} - i \circ \varphi$ on $\mathcal{C}_0$; and
\item[(f)] $d'\circ h + h\circ d' = \mathrm{Id}$ on $\mathcal{C}_1$,
\end{enumerate}
where $i:\mathcal{E}\to \mathcal{C}_0$ is the inclusion.
We extend $h$ to the whole space $\mathcal{C}$ by letting $h = 0$ on
$\mathcal{C}_0$.

Define $b : \mathcal{C} \to \mathcal{C}$  by 
$$b=\left\{\begin{array}{cc}  -(d'' \circ h + h \circ d'') & \text{ on }\mathcal{C}_1\\  i \circ \varphi &\text{ on }\mathcal{C}_0 
\end{array}\right..$$
By $(e)$ and $(f)$ we have the equality   
$\delta \circ h + h \circ \delta = \mathrm{Id} - b$ on $\mathcal{C}$.
This implies in particular that $b$ commutes with $\delta$.

We are now ready to show that $D^{*}_{[K]}$ retracts to $D^{*}_{[K-1]}$ for every $K \ge 1$.  
Observe that $h$ sends $D^m_{[K]}$ to  $D^{m-1}_{[K]}$, then we can consider $h_{[K]} : D ^{m}_{[K]} \to D ^{m-1}_{[K]}$ the induced operator. The map $b$ satisfies $b(C^{k, \ell}) \subset C^{k-1,\ell +1}$ for $k \ge 1$ and $b(C^{0, \ell}) \subset C^{0, \ell}$.
Moreover, for $K \ge 1$, one has 
$$b\bigl(\mathrm{Ker} d'\vert _{C^{K,\ell}}\bigr) \subset \mathrm{Ker} d' \vert _{ C^{K-1,\ell+1}}.$$
Indeed, if $d'\omega = 0$, then one has also $d'd''\omega = 0$. The definition of $b$ and the relation (f) gives:
$$d'b\omega = -(d'd''h\omega + d'hd''\omega) = d''d'h\omega - d'hd''\omega = -d''\omega + d''\omega = 0.$$
Therefore $b$ sends every $D^m_{[K]}$ to $D^m_{[K-1]}$ for $K \ge1$. Let $b_{[K]} : D^*_{[K]} \to D^* _{[K-1]}$ be the induced operator.
As we saw above, it commutes with $\delta$. Since $\delta \circ h + h \circ \delta = \mathrm{Id} - b$
on $\mathcal{C}$, we get
$$\delta \circ h_{[K]} + h_{[K]} \circ \delta = \mathrm{Id} - i _{[K-1]} \circ b_{[K]}$$
and also 
$$\delta \circ h_{[K-1]} + h_{[K-1]} \circ \delta = \mathrm{Id} - b_{[K]} \circ i _{[K-1]},$$
where $i _{[K-1]} : D^* _{[K-1]} \to D^*_{[K]}$ is the inclusion. All these maps are continuous, then the lemma follows.
\end{proof}

\begin{lemma}[Lemma 8 in \cite{Pa95}]\label{aciclico}
Let $B$ be the unit ball in $\R^n$, then the cochain complex $\bigl(L^pC^*(B),d\bigr)$ retracts to the complex $\bigl(\R\to 0\to 0\to\cdots\bigr).$
\end{lemma}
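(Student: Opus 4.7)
The plan is to exhibit an explicit chain homotopy built from the Poincar\'e cone operator averaged over the base point, which is the standard device to kill the radial singularity of a single cone. Fix a smooth probability density $\rho$ on $B$ with compact support in the interior, and for a smooth $k$-form $\omega$ on $B$ with $k\ge 1$ set
$$K\omega_x(v_1,\ldots,v_{k-1})=\int_B\rho(y)\int_0^1 t^{k-1}\omega_{y+t(x-y)}(x-y,v_1,\ldots,v_{k-1})\,dt\,dy,$$
and $K=0$ in degree $0$. Since $B$ is convex, the segments $y+t(x-y)$ remain in $B$, so the definition makes sense. Applying Cartan's formula to the pullback by the homothety $\Phi_t(x)=y+t(x-y)$ yields, on smooth forms,
$$dK\omega+Kd\omega=\omega\text{ for }k\ge 1,\qquad Kd\omega=\omega-\varphi(\omega)\cdot 1\text{ for }k=0,$$
where $\varphi(\omega)=\int_B\rho(y)\omega(y)\,dy$ is a continuous linear functional on $L^pC^0(B)$ by H\"older's inequality. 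Since $B$ is connected, $\mathrm{Ker}\,d|_{L^pC^0(B)}$ is the line of constants, and $\omega\mapsto\varphi(\omega)\cdot 1$ is the projection onto it, which is exactly the retraction data required by Lemma \ref{weyl}.

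The substantive step is to verify that $K$ extends from smooth forms to a continuous operator $L^pC^k(B)\to L^pC^{k-1}(B)$ in the graph norm. The change of variable $z=y+t(x-y)$, for fixed $x$ and $t$, has Jacobian $(1-t)^n$ and rewrites $K$ as an integral operator
$$K\omega(x)=\int_B\mathcal{K}(x,z)\cdot\omega(z)\,dz,$$
whose matrix-valued kernel $\mathcal{K}(x,z)$ is obtained by integrating in $t$ against the smooth weight $\rho$. The averaging in $y$ smooths out the ray-integral of a single cone and yields the pointwise bound $|\mathcal{K}(x,z)|\le C|x-z|^{1-n}$ on the bounded domain $B$. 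Since $\int_B|x-z|^{1-n}\,dz$ is bounded uniformly in $x$, a Schur/Young type estimate gives $\|K\omega\|_{L^p(B)}\le C\|\omega\|_{L^p(B)}$. The identity $dK\omega=\omega-Kd\omega$ in degrees $\ge 1$ then yields $\|dK\omega\|_p\le C(\|\omega\|_p+\|d\omega\|_p)$, so $K$ is continuous in the graph norm and extends by density to the completion $L^pC^k(B)$; the homotopy identities pass to the limit by continuity.

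The main obstacle is the $L^p$-boundedness of $K$. The pointwise cone operator from a single base point $y_0$ is not $L^p$-bounded, because the pullback by $\Phi_t$ produces a Jacobian that blows up along the rays emanating from $y_0$; it is the averaging in $y$ against a smooth density that converts the operator into one with a weakly singular kernel of Calder\'on-Zygmund type, amenable to the standard $L^p$ theory. Once this estimate is secured, the verification of the homotopy identities on smooth forms is a direct Cartan-formula computation, and the density of $L^p\Omega^k(B)$ in $L^pC^k(B)$ transfers everything to the completion.
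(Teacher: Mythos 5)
Your proposal is correct and follows essentially the same route as the paper: both average the Poincar\'e cone operator over its apex against a probability measure supported in the interior of $B$ (the paper uses the normalized indicator of $\frac{1}{2}B$, you use a smooth density $\rho$), derive the same weakly singular kernel bound $|x-z|^{1-n}$ via the change of variables $z=y+t(x-y)$, obtain $L^p$-boundedness from the integrability of that kernel, and handle degree $0$ by the averaging functional before extending to the completion by density. The only differences are cosmetic (explicit integral formula and Cartan's formula versus the simplicial-cone/Stokes derivation, and Schur--Young versus Jensen for the kernel estimate).
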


In the proof of Lemma \ref{aciclico} we will use the following version of the Leibniz Integral Rule, whose proof follows directly from the classic version or applying the Dominated Convergence Theorem. 

\begin{lemma}\label{Leibniz}
Consider a measure space $(Z,\mu)$ and a family of differential $k$-forms $\{\omega(z)\}_{z\in Z}$
defined on an open set $U\subset\R^n$.
Supose that every coefficient of $\omega(z)_x$ is integrable on $z$ and each of its iterated partial derivatives of any order is dominated by a function in $L^1(Z)$. Then the $k$-form $\omega$ defined by
$$\omega_x(u_1,\ldots,u_k)=\left(\int_Z \omega(z)_x d\mu(z)\right)(u_1,\ldots,u_k)=\int_Z \omega(z)_x(u_1,\ldots,u_k)d\mu(z)$$
belongs to $\Omega^k(M)$ and its derivative is $d\omega=\int_Z d\omega(z) d\mu(z).$
\end{lemma}

\begin{proof}[Proof of Lemma \ref{aciclico}]
For a fixed $x\in B$ we consider $\varphi_x:[0,1]\times B\to B$, $\varphi_x(t,y)=ty+(1-t)x$ and $\eta_t:B\to [0,1]\times B$, $\eta_t(y)=(t,y)$. Denote $\frac{\partial}{\partial t}=(1,0)\in [0,1]\times B$ and define the $\frac{\partial}{\partial t}$-contraction of a $k$-form $\omega$ as the $(k-1)$-form 
$$\iota_{\frac{\partial}{\partial t}}\omega_y (u_1,\ldots,u_{k-1})=\omega_y\left(\mathsmaller{\frac{\partial}{\partial t}},u_1,\ldots,u_{k-1}\right).$$

Now we define, for $y\in B$, $\omega\in L^\phi\Omega^k(B)$ and $u_1,\ldots,u_{k-1}$,
\begin{align*}
\chi_x(\omega)=\int_0^1 \eta_s^*\left(\iota_{\frac{\partial}{\partial t}}\varphi_x^*\omega\right)ds.
\end{align*}
Observe that the coefficients of 
$\eta_s^*\Bigl(\iota_{\frac{\partial}{\partial s}}\varphi_x^*\omega\Bigr)_y$ are smooth on $t,x$ and $y$, thus we can apply Lemma \ref{Leibniz} to prove that  $\chi_x(\omega)$ is a differential $(k-1)$-form.

For every $(k-1)$-simplex $\sigma$ we have
\begin{align*}
\int_\sigma \chi_x(\omega) 
= \int_\sigma\int_0^1 \eta_s^*\left(\iota_{\frac{\partial}{\partial t}}\varphi_x^*\omega\right)ds
= \int_{[0,1]\times\sigma}\varphi_x^*\omega
= \int_{\varphi([0,1]\times\sigma)}\omega
= \int_{C_\sigma}\omega,
\end{align*}
where he cone $C_\sigma$ is defined as follows: If $\sigma=(x_0,\ldots,x_{k-1})$, then $C_\sigma=(x,x_0,\ldots,x_{k-1})$.

Suppose that $\sigma$ is a $k$-simplex in $B$ with $\partial \sigma=\tau_0+\cdots+\tau_k$ and $\omega\in \Omega^k(B)$. Using Stoke's theorem we have
\begin{align*}
\int_\sigma \chi_x(d\omega) = \int_{C_\sigma} d\omega = \int_{\partial C_\sigma}\omega = \int_\sigma \omega-\sum_{i=0}^k \int_{C_{\tau_i}}\omega =\int_\sigma \omega- \int_{\partial\sigma}\chi_x(\omega)=\int_\sigma \omega- \int_{\sigma}d\chi_x(\omega).   
\end{align*}
Since this is true for every $k$-simplex, we conclude that 
\begin{equation}\label{dos}
\chi_x d+d\chi_x=Id    
\end{equation}
(see for example \cite[Chapter IV]{W}). Observe that if $\omega$ is closed, then $\chi_x(\omega)$ is a primitive of $\omega$, thus this proves the classic Poincaré's lemma. However, in our case we need a primitive in $L^p$, so we will take a convenient average. 

\medskip

\medskip

Define
$$h(\omega)=\frac{1}{\Vol\left(\frac{1}{2}B\right)}\int_{\frac{1}{2}B}\chi_x(\omega)dx,$$
where $\frac{1}{2}B=B\left(0,\frac{1}{2}\right)$.

Since the coefficients of $(x,y)\mapsto \chi_x(\omega)_y$ are smooth in both variables we can use again Lemma \ref{Leibniz} to see that $h(\omega)$ is in $\Omega^k(B)$. Notice that this works because we take the integral on a ball with closure included in $B$. Moreover, the derivative of $h$ is
\begin{equation*}
dh(\omega)=\frac{1}{\Vol\left(\frac{1}{2}B\right)}\int_{\frac{1}{2}B}d\chi_x(\omega)dx.
\end{equation*}
Using (\ref{dos}) we have
\begin{equation}\label{homot}
dh(\omega)+h(d\omega)=\omega
\end{equation}
for every $\omega\in L^p\Omega^k(B)$ with $k\geq 1$.

We want to prove that $h$ is well-defined and continuous from $L^p\Omega^k(B)$ to $L^p\Omega^{k-1}(B)$.
To this end we first bound $|\chi_x(\omega)|_y$ for $y\in B$ and $\omega\in\Omega^k(B)$. Since $\iota_{\frac{\partial}{\partial t}}\varphi^*\omega$ is a form on $[0,1]\times B$ that is zero in the direction of $\frac{\partial}{\partial t}$, we have 
$|\eta_s^*(\iota_{\frac{\partial}{\partial t}}\varphi^*\omega)|_y=|\iota_{\frac{\partial}{\partial t}}\varphi^*\omega|_{(t,y)}$ for every $t\in (0,1)$ and $y\in B$. A direct calculation gives us the estimate $$|\iota_{\frac{\partial}{\partial t}}\varphi^*\omega|_{(t,y)}\leq t^{k-1}|y-x||\omega|_{\varphi(t,y)}.$$ 
Therefore, using the assumption that $t\in (0,1)$, we can write
\begin{equation}\label{uno}
|\chi(\omega)|_y\leq \int_0^1|y-x||\omega|_{\varphi(t,y)}dt.
\end{equation}

Consider the function $u:\R^n\to\R$ defined by $u(z)=|\omega|_z$ if $z\in B$ and $u(z)=0$ in the other case. Using (\ref{uno}) and the change of variables $z=ty+(1-t)x$, we have
\begin{align*}
\Vol\left(\mathsmaller{\frac{1}{2}}B\right)|h(\omega)|_y		
&\leq \int_{B\left(ty,\frac{1-t}{2}\right)}\int_0^1|z-y|u(z)(1-t)^{-n-1}dtdz \\
&=  \int_{B(y,2)}|z-y|u(z)\left(\int_0^1 \mathds{1}_{B\left(ty,\frac{1-t}{2}\right)}(z)(1-t)^{-n-1} dt\right)dz. 	
\end{align*}
Observe that $\mathds{1}_{B\left(ty,\frac{1-t}{2}\right)}(z)=1$ implies that $|z-y|\leq 2(1-t)$. Thus,
$$\int_0^1\mathds{1}_{B\left(ty,\frac{1-t}{2}\right)}(z)(1-t)^{-n-1}dt\leq \int_0^{1-\frac{1}{2}|z-y|}(1-t)^{-n-1}dt= \int_{\frac{1}{2}|z-y|}^1 r^{-n-1}dr\preceq \frac{1}{|z-y|^n}.$$
This implies
$$Vol(\mathsmaller{\frac{1}{2}}B)|h(\omega)|_y\preceq \int_{B(y,2)}|z-y|^{1-n}u(z)\,dz.$$

Using that $\int_{B(y,2)}|z-y|^{1-n}dz$ is finite and Jensen's inequality we obtain
$$|h(\omega)|_y^p \preceq \int_{B(y,2)}|z-y|^{1-n}u(z)^p dz.$$
Therefore,
\begin{align*}
\|h(\omega)\|^p_{L^p}	&\preceq \int_B \int_{B(y,2)}|z-y|^{1-n}u(z)^p dzdy
&\preceq \int_{B(0,3)}u(z)^p\left(\int_B \frac{dy}{|z-y|^{n-1}}\right)dz\preceq \|\omega\|_{L^p}^p.
\end{align*}
By the identity $dh(\omega)=\omega-h(d\omega)$ we also have 
$$\|dh(\omega)\|_{L^p}\leq \|\omega\|_{L^p}+\|h(d\omega)\|_{L^p}\preceq \|\omega\|_{L^p}+\|d\omega\|_{L^p}=|\omega|_{L^p}.$$
We conclude that $h$ is well-defined and bounded for $k\geq 1$.

If $\omega=df$ for certain function $f$ we observe that
$$\eta_s^*\left(\iota_{\frac{\partial}{\partial t}}\varphi^*_x df\right) (y)= df_{\varphi_x(s,y)}(y-x)=(f\circ\alpha)'(s),$$
where $\alpha$ is the curve $\alpha(s)=\varphi_x(s,y)$. Then $\chi_x(df)(y)=f(y)-f(x)$, from which we get
$$h(df)=f-\frac{1}{Vol\left(\mathsmaller{\frac{1}{2}}B\right)}\int_{\frac{1}{2}B} f.$$
We define $h:L^p\Omega^0(B)\to L^p\Omega^{-1}(B)=\R$ 
by
$$h(f)=\frac{1}{Vol\left(\mathsmaller{\frac{1}{2}}B\right)}\int_{\frac{1}{2}B} f,$$
which is clearly continuous because $\frac{1}{2}B$ has finite Lebesgue measure. Then we have the identity
\begin{equation}\label{homot2}
h(f)+h(df)=f.
\end{equation}

Note that, since $h$ is bounded, then it can be extended continuously to $L^pC^k(B)$ for every $k\geq 0$. The identities \eqref{homot} and \eqref{homot2} are also true for every $\omega\in L^pC^k(B)$, which finishes the proof.
\end{proof}

\begin{proof}[Proof of Theorem \ref{equiv}]
Our strategy is to construct a convenient bicomplex and apply Lemma \ref{weyl}. To that end consider $\mathcal{U}$ the covering consisting of the open stars of the vertices of $X_M$. For a $\ell$-simplex $\Delta=(v_0,\ldots,v_\ell)$ we write
$U_\Delta=U(v_0)\cap\cdots\cap U(v_\ell)$.
Then for $k,\ell\geq 1$ we take the space $C_\xi^{k,\ell}$ consisting of functions of the form
$$\omega=\left\{\omega_\Delta\right\}_{\Delta\in X^{(\ell)}_M}\in \prod_{\Delta\in X^{(\ell)}_M} L^p C^k(U_\Delta)$$
such that
\begin{enumerate}
\item[(g)] If $i,j=0,\ldots,k$, $i\neq j$, then $\omega_{(v_0,\ldots, v_i,\ldots, v_j,\ldots, v_\ell)}=-\omega_{(v_0,\ldots,v_j,\ldots,v_i,\ldots, v_\ell)}$,
\item[(h)] $\sum_{\Delta\in X_M^{(\ell)}}\left(\|\omega_\Delta\|^p_{L^p}+\|d\omega_\Delta\|^p_{L^p}\right)<+\infty$, and
\item[(i)] There exists a neighbourhood $V$ of $\xi$ in $\overline{M}$ such that if $U_\Delta\subset V$, then $\omega_\Delta=0$ almost everywhere. 
\end{enumerate}
We equip $C_\xi^{k,\ell}$ with the norm 
$$\|\omega\|_p=\left(\sum_{\Delta\in  X_M^{(\ell)}}\|\omega_\Delta\|_{L^p}^p\right)^{\frac{1}{p}}+\left(\sum_{\Delta\in  X_M^{(\ell)}}\|d\omega_\Delta\|_{L^p}^p\right)^{\frac{1}{p}}.$$

Then we define the derivatives $d':C_\xi^{k,\ell}\to C_\xi^{k+1,\ell}$ and $d'':C_\xi^{k,\ell}\to C_\xi^{k,\ell+1}$:
\begin{itemize}
\item If $\omega\in C_\xi^{k,\ell}$, then $(d'\omega)_\Delta=(-1)^\ell d\omega_\Delta$.
\item If $\omega\in C_\xi^{k,\ell}$ and  $\Delta=(v_0,\ldots, v_{\ell+1})$, then
$$(d''\omega)_\Delta=\sum_{i=0}^{\ell+1}(-1)^i\omega_{\partial_i\Delta}|_{U_\Delta}.$$
Where $\partial_i\Delta=(v_0,\ldots,\hat{v_i},\ldots,v_{\ell+1})$. 
\end{itemize}
It is easy to show that $d'$ and $d''$ are well-defined and continuous and satisfy $d'\circ d'=0$, $d'\circ d'=0$ and $d'\circ d''+d''\circ d''=0$, then $\bigl(C_\xi^{*,*},d',d''\bigr)$ is a bicomplex.

Observe that the elements of $\mathrm{Ker}\ d'|_{C_\xi^{0,\ell}}$ are the functions $g\in C_\xi^{0,\ell}$ satisfying that $g_\Delta$ is essentially constant in $U_\Delta$ for every $\Delta\in X_M^{(\ell)}$.
Using the fact that every $U_\Delta$ is biLipschitz diffeomorphic (with uniform Lipschitz constant) to the unit ball in $\R^n$ we conclude that $\mathrm{Ker}\ d'|_{C_\xi^{0,\ell}}$ is isomporhic to $\ell^p C^\ell(X_M,\xi)$ and $d''$ coincides with the derivative on this space.

On the other hand, the elements of $\mathrm{Ker}\ d''|_{C_\xi^{k,0}}$ are of the form $\omega=\{\omega_v\}_{v\in X_M^{(0)}}$ with 
$$\omega_{v_1}|_{U(v_1)\cap U(v_2)}=\omega_{v_2}|_{U(v_1)\cap U(v_2)}$$
almost everywhere. We can take a $k$-form $\tilde{\omega}$ in $L^pC^k(M)$ such that $\tilde{\omega}|_{U(v)}=\omega_v$ almost everywhere for every $v\in X_M^{(0)}$. This $k$-form is zero on some neighborhood of $\xi$, then there is an isomorphism between $\mathrm{Ker}\ d''|_{C_\xi^{k,0}}$ and $L^p C^k(M,\xi)$ for which $d'$ coincides with the derivative on the second space. It is clear that $\|\omega\|_p\asymp |\tilde{\omega}|_{L^p}$ because every point of $M$ is in at most $n$ elements of $\mathcal{U}$.\\ 

\underline{Claim 1}: For a fixed $\ell$ the complex $\bigl(C_\xi^{*,\ell},d'\bigr)$ retracts to $\bigl(\mathrm{Ker}\ d'|_{C_\xi^{0,\ell}}\to 0\to 0\to\cdots\bigr)$.\\

We take the family of maps $h:L^p C^k(B)\to L^p C^{k-1}(B)$ given by Lemma \ref{aciclico} (where $L^p C^{-1}(B)=\R$), and for every  $\Delta\in X_M^{(\ell)}$ we consider a $L$-biLipschitz dffeomorphism $f_\Delta:U_\Delta\to B$ (which $L$ does not depend on $\Delta$). Then we define $H:C_\xi^{k,\ell}\to C_\xi^{k-1,\ell}$ by
$$(H\omega)_\Delta=(-1)^\ell f_U^* h\bigl(f_\Delta^{-1}\bigr)^*\omega_\Delta,$$
where $f_\Delta^{*}:L^pC^*(B)\to L^pC^*(U_\Delta)$ is the continuous extension of the usual pull-back of differential forms.
We write $C_\xi^{-1,\ell}:= \mathrm{Ker}\ d'|_{C_\xi^{0,\ell}}$.

It is easy to prove that $f_\Delta^*$ and $\bigl(f^{-1}_\Delta\bigr)^*$ are continuous for every $U$ by using the Lipschitz condition; hence, using also the definition of $h$, we have that $H$ is the continuous retraction we wanted.\\

\underline{Claim 2}: For a fixed $k$ the complex $\bigl(C_\xi^{k,*},d''\bigr)$ retracts to $\bigl(\mathrm{Ker}\ d''|_{C_\xi^{k,0}}\to 0\to 0\to \cdots\bigr)$.\\

We have to construct a family of bounded linear maps $\kappa:C_\xi^{k,\ell}\to C_\xi^{k,\ell-1}$ ($\ell\geq 0$), where $C_\xi^{k,-1}=\mathrm{Ker}\ d''|_{C_\xi^{k,0}}$,  such that
$$\left\{\begin{array}{cc}
    \kappa d''\omega+d''\kappa\omega=\omega & \text{ for every } \omega\in C_\xi^{k,\ell},\ \ell\geq 1,\\
    \kappa d'' g +\kappa g = g &\text{ for every }g\in C_\xi^{k,0}
\end{array}\right.$$

Let $\{\eta_v\}_{v\in X_M^{(0)}}$ be a partition of unity with respect to $\mathcal{U}$. Observe that we can take it so that $|d\eta_v|_x$ is uniformly bounded independently from $v$ and $x$. If $\ell\geq 1$ and $\omega\in C_\xi^{k,\ell}$, then for $\Delta\in X_M^{(\ell-1)}$ we define
$$(\kappa\omega)_\Delta=(-1)^{\ell}\sum_{v\in X_M^{(0)}}\eta_v\omega_{\Delta v}.$$
Where if $\Delta=(v_0,\ldots,v_{\ell+1})$, then $\Delta v=(v_0,\ldots,v_{\ell+1},v)$. Observe that if $v$ ia a vertex of $\Delta$, then $\omega_{\Delta v}=0$ because of condition (g).
For $\omega\in C_\xi^{k,0}$ and $v\in X_M^{(0)}$ we put
$$(\kappa\omega)_{v_0}=\sum_{v\in X_M^{(0)}}\eta_v\omega_v|_{U_{v_0}}.$$

A direct calculation shows that $\kappa$ is as we wanted.\\

Finally, applying Lemma \ref{weyl} we obtain that $(D^*,\delta)$ is homotopically equivalent to $\bigl(\mathrm{Ker}\ d'|_{C_\xi^{0,*}},d''\bigr)$ and $\bigl(\mathrm{Ker}\ d''|_{C_\xi^{*,0}},d'\bigr)$. The proof ends using the above identifications.

\end{proof}

\begin{remark}\label{suavizar}
Observe that the previous argument can be done considering smooth forms, so one can show that the cochain complexes $\bigl(L^p\Omega^*(M),d\bigr)$, $\bigl(L^p C^*(M),d\bigr)$ and $\bigl(\ell^p (X_M^{(*)}),\delta\bigr)$ are  all homotopically equivalent. This implies that the relative $L^p$-cohomology can be described by using smooth forms.
\end{remark}

\section{Construction of non-zero cohomology classes on Heintze groups}\label{Sec3}

\subsection{A duality idea}\label{SeccionDualidad}

In \cite{GKS2} and \cite{GT10} the following fact is proved: If $M$ is a complete and orientable $n$-dimensional Riemannian manifold, then for every $p\in(1,+\infty)$ and $k=0,\ldots,n$, the dual space of $L^p\overline{H}^k(M)$ is isometric to $L^q\overline{H}^{n-k}(M)$, where $\frac{1}{p}+\frac{1}{q}=1$. The isometry is induced by the pairing $\langle\text{ , }\rangle:L^p(M,\Lambda^k)\times L^q(M,\Lambda^{n-k})\to\R$,
\begin{equation}\label{pairing}
\langle \omega, \beta\rangle=\int_M \omega\wedge\beta,
\end{equation}
which is well-defined by Hölder's inequality. The proof uses that $L^p(M,\Lambda^k)$ and $L^q(M,\Lambda^{n-k})$ are Banach spaces. Other duality-type arguments using weaker hypothesis can be read in \cite{GT98,GT06,Pa08}.

In the relative case we have to find a natural pairing for $L^p\Omega^k(M,\xi)$ (or $L^p C^k(M,\xi)$). The answer seems to be related to the idea of local cohomology, which can be found in \cite{C}. Let us see the following definition:  Consider $M$ a complete and orientable Gromov-hyperbolic Riemannian manifold and $\xi$ a point in $\partial M$. A differential $m$-form $\beta$ on $M$ is \textit{locally} $L^q$-\textit{integrable with respect to} $\xi$ if for every $V\subset\overline{M}$ closed neighborhood of $\xi$, we have
$$\|\beta\|_{L^q,M\setminus V}=\left(\int_{M\setminus V}|\beta|^q_xdx\right)^{\frac{1}{q}}<+\infty.$$
We denote by $L^q_{\mathrm{loc}}\Omega^m(M,\xi)$ the space of all differential $m$-forms which are locally $L^q$-integrable with respect to $\xi\in\partial M$. Observe that Hölder's inequality implies that the bi-linear pairing \begin{equation}\label{pairingg}
\langle\text{ , }\rangle:L^p\Omega^k(M,\xi)\times L^q_{\mathrm{loc}}\Omega^{n-k}(M,\xi)\to\R    
\end{equation} 
is well-defined by the expression \eqref{pairing} if $\frac{1}{p}+\frac{1}{q}=1$. This allows to consider the induced linear transformations $\mu_\omega:L^q_{\mathrm{loc}}\Omega^{n-k}(M,\xi)\to\R$, $\mu_\omega=\langle\omega,\cdot\rangle$ and $\nu_\beta:L^p\Omega^k(M,\xi)\to\R$, $\nu_\beta=\langle\cdot,\beta\rangle$. We will use these maps to construct non-zero classes in the relative $L^p$-cohomology of Heintze groups.

\subsection{Proof of Theorem \ref{TeoremaLpHeintzeCalculo}}

Let $G=\R^{n-1}\rtimes_{\alpha}\R$ be a purely real Heintze group, where $\alpha$ has positive eigenvalues $\lambda_1\leq\cdots\leq\lambda_{n-1}$. Remember the notation $w_k=w_k(\alpha)=\lambda_1+\cdots+\lambda_k$ for $k=1,\ldots,n-1$; we also write $w_0=0$. The product on $G$ is given by 
$$(x,t)\cdot (y,s)=(x+e^{t\alpha} y,t+s).$$
We denote by $L_{(x,t)}$ the left translation by $(x,t)$ on $G$.

If $\langle\ ,\ \rangle_0$ is an inner product on $T_0G$ such that the factors $\R^{n-1}$ and $\R$ are orthogonal, then it determines a unique left-invariant Riemannian metric on $G$ given by
\begin{align*}
\langle (v_1,v_2),(w_1,w_2) \rangle_{(x,t)}
&=\bigl\langle (d_0L_{(x,t)})^{-1}(v_1,v_2),(d_0L_{(x,t)})^{-1}(w_1,w_2) \bigr\rangle_0\\
&=\langle e^{-t\alpha}v_1,e^{-t\alpha}w_1 \rangle_0+\lambda v_2w_2, 
\end{align*}
for every $v_1,w_1\in\R^{n-1}$, $v_2,w_2\in\R$ and $\lambda$ a fixed positive real number (any other left-invariant Riemannian metric is biLipsichitz equivalent to this one). In particular, if $v$ is a \textit{horizontal vector} in $T_{(x,t)}G$ (i.e. $v=(v_1,0)$), then the norm associated to $\langle \ ,\  \rangle_{(x,t)}$ of $v$ is 
$$\|v\|_{(x,t)}=\|e^{-t\alpha}v\|_0.$$

For every $x\in\R^{n-1}$ the curve $t\mapsto (x,t)$ is a geodesic on $G$. All these vertical geodesics are asymptotic to the future and so they define a unique point on the boundary, denoted by $\infty$ . Moreover, any other point of the boundary can be represented by one of these geodesics to the past. As a consequence we can identify the boundary $\partial G$ with $\R^{n-1}\cup\{\infty\}$.

\begin{remark}\label{lemaCornulier}
The group $\R^{n-1}$ acts by isometries on $G$, thus the group $QI(G)$ of self quasi-isometries of $G$ acts transitively on $\partial G\setminus\{\infty\}$. This implies that the action of $QI(G)$ satisfy either
\begin{itemize}
    \item $QI(G)$ acts transitively on $\partial G$, or
    \item $\infty$ is fixed by $QI(G)$.
\end{itemize}
If two Heintze groups as above are quasi-isometric, then they must satisfy simultaneously either the first or the second condition. In both cases we can observe that there exists a quasi-isometry between them that preserves the point $\infty$. A more general version of this result is proved in \cite[Lemma 6.D.1]{Cor}.
\end{remark}

Observe that a neighborhood system for the point $\infty\in\partial G$ is given by the compactification in $\overline{G}$ of the sets of the form $G\setminus \bigl(B_R\times[T,-\infty)\bigr)$, where $B_R=B(0,R)\in \R^{n-1}$ for $R>0$, and $T\in\R$.

We rename the eigenvalues of $\alpha$ by $\mu_1<\cdots<\mu_d$ and fix a Jordan basis of $\R^{n-1}$, 
$$\mathcal{B}=\{b_{ij}^\ell:i=1,\ldots,d; j=1,\ldots,r_i ;\ell=1,\ldots,m_{ij}\},$$ 
where $r_i$ is the dimension of the $\mu_i$-eigenspace, spanned by $\bigl\{b_{i1}^1,\ldots,b_{ir_i}^1\bigr\}$, $m_{ij}$ is the size of the $j$-Jordan subblock associated to $\mu_i$, and $\alpha(b_{ij}^\ell)=\mu_i b_{ij}^\ell+b_{ij}^{\ell-1}$ for every $\ell=2,\ldots,m_{ij}$. We can write
\begin{equation}\label{descomposicion}
\R^{n-1}=\bigoplus_{i,j}V_{ij},\text{ where }V_{ij}=\mathrm{Span}(\{b^\ell_{ij} : \ell=1,\ldots,m_{ij}\}).
\end{equation}

Let us denote by $\frac{\partial}{\partial t}$ the unit positive vector that spans the factor $\R$ of $G$ and by $dt$ the $1$-form associated to $\frac{\partial}{\partial t}$.
The $1$-forms associated to the dual basis of $\mathcal{B}$ are denoted by $dx_{ij}^\ell$. We put on $G$ the left-invariant Riemannian metric making the basis $\mathcal{B}\cup\bigl\{\frac{\partial}{\partial t}\bigr\}$ orthonormal in $T_0 G$.

Observe that 
$$e^{t\alpha}b_{ij}^\ell=e^{t\mu_i}\left(b_{ij}^\ell+tb_{ij}^{\ell-1}+\ldots+\frac{t^{\ell-1}}{(\ell-1)!}b_{ij}^1\right),$$
which implies
$$L^*_{(x,t)}dx_{ij}^\ell=e^{t\mu_i}\left(dx_{ij}^{\ell}+\ldots+\frac{t^{m_{ij}-\ell}}{(m_{ij}-\ell)!}dx_{ij}^{m_{ij}}\right).$$

For every $k=1,\ldots,n-1$ we denote by $\mathcal{I}_k$ the set of multi-indices 
\begin{equation}\label{FormasIndices}
I=(i_1,\ldots,i_k,j_1,\ldots,j_k,\ell_1,\ldots,\ell_k) 
\end{equation}
with $i_h=1,\ldots,d$, $j_h=1,\ldots,r_{i_h}$ and $\ell_h=1,\ldots,m_{i_hj_h}$ for every $h=1,\ldots,k$. We also assume that the function $h\mapsto (i_h,j_h,\ell_h)$ is injective and preserves the lexicographic order. For a multi-index as (\ref{FormasIndices}) we write
$$dx_I=dx_{i_1j_1}^{\ell_1}\wedge\ldots\wedge dx_{i_kj_k}^{\ell_k},\text{ and }w_I=\mu_{i_1}+\cdots+\mu_{i_k}.$$
Consider in $\mathcal{I}_1$ the lexicographic order and $\zeta:\mathcal{I}_1\to \{1,\ldots,n-1\}$ the order-preserving bijection. We denote $dx_h=dx_{i j}^\ell$ if $h=\zeta(i,j,\ell)$. We also write $dx_n=dt$.

\begin{lemma}\label{lemaformasgeneral}
\begin{enumerate}
\item[(i)] For every $I\in\mathcal{I}_k$ there exists a positive polynomial $P_I$ such that
$$|dx_I|_{(x,t)}\asymp e^{tw_I}\sqrt{P_I(t)}.$$

\item[(ii)] The volume form on $G$ is 
$dV_{(x,t)}=e^{-t \mathrm{tr}(\alpha)}dx_1\wedge\cdots\wedge dx_{n}.$
\end{enumerate}
\end{lemma}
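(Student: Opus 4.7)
For both parts the plan is to reduce the calculation, via left-invariance, to a computation at the origin, exactly in the spirit of the proof of Lemma \ref{lemaformas}. On $\Lambda^k(T_0G)$ I would introduce the inner product $\langle\langle\cdot,\cdot\rangle\rangle_0$ making the family $\{dx_I : I\in\Delta_k\}$ orthonormal, then extend it by left-invariance to a smooth assignment of inner products $\langle\langle\cdot,\cdot\rangle\rangle_{(x,t)}$ on $\Lambda^k(T_{(x,t)}G)$. Denote the associated norm by $[\,\cdot\,]_{(x,t)}$. Since the operator norm $|\,\cdot\,|_{(x,t)}$ is also left-invariant and both norms are equivalent on the finite-dimensional space $\Lambda^k(T_0G)$, one gets $|\,\cdot\,|_{(x,t)}\asymp[\,\cdot\,]_{(x,t)}$ with constants uniform in $(x,t)$. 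It therefore suffices to prove part (i) with $[\,\cdot\,]_{(x,t)}$ in place of $|\,\cdot\,|_{(x,t)}$.

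For part (i), I would use $[dx_I]_{(x,t)}=[L^*_{(x,t)}dx_I]_0$ together with the formula for $L^*_{(x,t)}dx_{ij}^\ell$ stated just before the lemma to write
$$L^*_{(x,t)}dx_I = e^{tw_I}\bigwedge_{h=1}^k\left(\sum_{\ell'_h=\ell_h}^{m_{i_hj_h}}\frac{t^{\ell'_h-\ell_h}}{(\ell'_h-\ell_h)!}\,dx_{i_hj_h}^{\ell'_h}\right).$$
Distributing and reordering each resulting elementary wedge as $\pm dx_J$ for some $J\in\Delta_k$ (the term vanishes if two factors coincide) yields a finite sum $L^*_{(x,t)}dx_I = e^{tw_I}\sum_{J\in\Delta_k}p_J(t)\,dx_J$ with $p_J\in\mathbb{Q}[t]$. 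Orthonormality of $\{dx_J\}$ in $\langle\langle\cdot,\cdot\rangle\rangle_0$ then gives
$$[dx_I]_{(x,t)}^2 = e^{2tw_I}\sum_{J\in\Delta_k}p_J(t)^2 \;=:\; e^{2tw_I}P_I(t).$$
A short combinatorial check — imposing that $\{(i_h,j_h,\ell'_h)\}_h$ and $\{(i_h,j_h,\ell_h)\}_h$ agree as unordered tuples for some permutation, and using the constraints $\ell'_h\geq\ell_h$ together with injectivity of $h\mapsto(i_h,j_h,\ell_h)$ along cycles — forces $\ell'_h=\ell_h$ for every $h$; hence $p_I(t)\equiv 1$. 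Therefore $P_I(t)\geq 1>0$ for every $t\in\R$, so $P_I$ is positive, and (i) follows after taking square roots.

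For part (ii), the image under $d_0L_{(x,t)}$ of the orthonormal basis $\mathcal{B}\cup\{\tfrac{\partial}{\partial t}\}$ of $T_0G$ is an orthonormal basis of $T_{(x,t)}G$; on the horizontal factor $d_0L_{(x,t)}$ acts by $e^{t\alpha}$, while it fixes $\tfrac{\partial}{\partial t}$. Evaluating the right-hand side of (ii) on this basis gives
$$e^{-t\mathrm{tr}(\alpha)}(dx_1\wedge\cdots\wedge dx_{n-1}\wedge dt)\left(e^{t\alpha}v_1,\ldots,e^{t\alpha}v_{n-1},\tfrac{\partial}{\partial t}\right) = e^{-t\mathrm{tr}(\alpha)}\det(e^{t\alpha}) = 1,$$
where $(v_1,\ldots,v_{n-1})$ is the ordered basis $\mathcal{B}$ and we use $\det(e^{t\alpha})=e^{t\mathrm{tr}(\alpha)}$. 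This identifies the proposed expression with the Riemannian volume form corresponding to the orientation in which $\mathcal{B}\cup\{\tfrac{\partial}{\partial t}\}$ is positive.

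The main obstacle is the bookkeeping for part (i): one must expand a $k$-fold wedge of sums, track the signs produced by reordering into canonical form, and argue that $P_I(t)$ is strictly positive on all of $\R$. The device of writing $P_I = \sum_J p_J(t)^2$ via the orthonormal expansion of $L^*_{(x,t)}dx_I$ is what makes the positivity statement transparent: it bypasses possible sign cancellations among the $p_J$ and reduces positivity to the single combinatorial observation that $p_I\equiv 1$.
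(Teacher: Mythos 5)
Your proof is correct and follows essentially the same route as the paper's: left-invariance reduces both parts to a computation at the origin via $L^*_{(x,t)}$, with the equivalence of the operator norm and the left-invariant Euclidean norm $[\ \ ]_{(x,t)}$ handling part (i), and evaluation on the pushed-forward orthonormal basis handling part (ii). Your sum-of-squares expansion $P_I=\sum_J p_J(t)^2$ with $p_I\equiv 1$ simply makes explicit the positivity that the paper leaves as ``easy to extract'' (alternatively, $P_I(t)$ is the squared norm of the nonzero form $e^{-tw_I}L^*_{(x,t)}dx_I$, hence positive for every $t$).
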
 

A polynomial $P$ is \textit{positive} if $P(t)>0$ for all $t\in \R$.

\begin{proof} 

$(i)$ On $\Lambda^k(T_0G)$ we consider the inner product $\langle\langle\text{ , }\rangle\rangle_0$ making the basis $\{dx_I:I\in\Delta_k\}$ orthonormal, thus for  $\beta,\gamma\in\Lambda^k(T_{(x,t)}G)$ we put 
\begin{equation}\label{ref1}
\langle\langle\beta,\gamma\rangle\rangle_{(x,t)}=\langle\langle L_{(x,t)}^*\beta,L_{(x,t)}^*\gamma\rangle\rangle_0.    
\end{equation}
This means that the inner product is left-invariant. 

The left-invariant norm induced by \eqref{ref1} is denoted by $[\ \ ]_{(x,t)}$. Since the operator norm $|\ \ |_{(x,t)}$ is also left-invariant, there exists a constant $C\geq 1$, independent from the point $(x,t)\in G$, such that
$C^{-1}|\ \ |_{(x,t)}\leq [\ \ ]_{(x,t)}\leq C|\ \ |_{(x,t)}.$
As a consequence it is enough to prove $(i)$ for $[\ \ ]_{(x,t)}$:
\begin{align*}
[dx_{i_1j_1}^{\ell_1}\wedge\ldots\wedge dx_{i_kj_k}^{\ell_k}]^2_{(x,t)}  &= [(L^*_{(x,t)}dx_{i_1j_1}^{\ell_1})\wedge\ldots\wedge (L^*_{(x,t)}dx_{i_kj_k}^{\ell_k})]^2_0\\
& =e^{2t(\mu_{i_1}+\ldots+\mu_{i_k})}\left[\left(dx_{i_1j_1}^{\ell_1}+\ldots+\frac{t^{m_{i_1j_1}-\ell_1}}{(m_{i_1j_1}-\ell_1)!}dx_{i_1j_1}^{m_{i_1j_1}}\right)\wedge\right.\\
&\left. \left. \ldots\wedge \left(dx_{i_kj_k}^{\ell_k}+\ldots+\frac{t^{m_{i_kj_k}-\ell_k}}{(m_{i_kj_k}-\ell_k)!}dx_{i_kj_k}^{m_{i_kj_k}}\right)\right]^2_0\right. 
\end{align*}
From this expression it is easy to extract the polynomial $P_I$.
 
$(ii)$ Here it is enough to prove that $dV_{(x,t)}(v_1,\ldots,v_n)=1$ for some positive orthonormal basis $\{v_1,\ldots,v_n\}\subset T_{(x,t)}G$. Since $\mathcal{B}\cup \{\frac{\partial}{\partial t}\}$ is orthonormal in $T_0G$, the basis
$$\mathcal{B}_{t}\cup \left\{\mathsmaller{\frac{\partial}{\partial t}}\right\}=\{d_0L_{(x,t)}(b_{ij}^\ell):i=1,\ldots,d; j=1,\ldots,r_i ;l=1,\ldots,m_{ij}\}\cup \left\{\mathsmaller{\frac{\partial}{\partial t}}\right\}$$
$$=\left\{e^{t\mu_i}\left(b_{ij}^\ell+\ldots+\frac{t^{\ell-1}}{(\ell-1)!}b^1_{ij}\right):i=1,\ldots,d; j=1,\ldots,r_i ;\ell=1,\ldots,m_{ij}\right\}\cup \left\{\mathsmaller{\frac{\partial}{\partial t}}\right\}$$
is orthonormal in $T_{(x,t)}G$. Then we can check the equality evaluating $dV_{(x,t)}$ in the elements of $\mathcal{B}_t\cup\left\{\frac{\partial}{\partial t}\right\}$.

\end{proof}

Let $V$ be the vertical vector field defined by $V(x,t)=\frac{\partial}{\partial t}$, and $\varphi_t(x,s)=(x,s+t)$ its associated flow. We say that a $k$-form $\omega$ is \textit{horizontal} if $\iota_V\omega=0$. Observe that if
\begin{equation}\label{horizontal}
\omega=\sum_{1\leq i_1<\ldots<i_k\leq n}a_{i_1,\ldots,i_k}dx_{i_1}\wedge\ldots\wedge dx_{i_k},    
\end{equation}
then $\omega$ is horizontal if and only if all coefficients $a_{i_1,\ldots,i_{k-1},n}$ are zero.

To prove that the relative $L^p$-cohomology of $(G,\infty)$ is zero for every $p>\frac{tr(\alpha)}{w_{k-1}}$ we will follow the idea of \cite[Proposition 10]{Pa08}. To that end it is necessary to see that the vertical flow $\varphi_t$ contracts exponentially the horizontal forms. 

We define another left-invariant norm on $G$: For every $v\in\R^n$ we write
\begin{equation}\label{descvect}
v=\sum_{i,j} v_{ij} +a\mathsmaller{\frac{\partial}{\partial t}},
\end{equation}
where the first sum corresponds to decomposition (\ref{descomposicion}). Given a point $(x,t)\in G$ we define
$$\langle v\rangle_{(x,t)}=\sum_{i,j}\|v_{ij}\|_{(x,t)}+|a|.$$
Since the subspaces $V_{ij}$ are invariant by $e^{t\alpha}$, we can easily see that the norm $\langle \text{ }\rangle_{(x,t)}$ is left-invariant and, as a consequence, equivalent to the Riemannian norm $\|\text{ }\|_{(x,t)}$. This gives us the following lemma:

\begin{lemma}\label{equiv}
Let $\omega$ be a $k$-form on $G$, then
$$|\omega|_{(x,t)}\asymp\sup\left\{|\omega_{(x,t)}(v_1,\ldots,v_k)| : \langle v_i\rangle_{(x,t)}=1\text{ for all }i=1,\ldots,k\right\},$$
where the constant does not depend on $\omega$ or the point $(x,t)\in G$.
\end{lemma}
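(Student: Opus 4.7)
The plan is to reduce this to the equivalence of the two norms $\|\cdot\|_{(x,t)}$ and $\langle\cdot\rangle_{(x,t)}$ on the tangent space, and then transfer the equivalence to the induced operator norms on $\Lambda^k(T_{(x,t)}G)$ by a straightforward rescaling argument.

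First, I would make precise the claim (used implicitly just before the statement of the lemma) that the two Riemannian norms are comparable with a uniform constant. Both norms are left-invariant: for $\|\cdot\|_{(x,t)}$ this is by construction of the metric, and for $\langle\cdot\rangle_{(x,t)}$ it follows from the fact that the decomposition (\ref{descomposicion}) is preserved by $e^{t\alpha}$, so that $d_0L_{(x,t)}$ maps each $V_{ij}$ factor into itself and leaves the $\tfrac{\partial}{\partial t}$ factor invariant, whence left translation commutes with the decomposition $v=\sum_{i,j}v_{ij}+a\tfrac{\partial}{\partial t}$. Therefore the ratio $\langle v\rangle_{(x,t)}/\|v\|_{(x,t)}$ depends only on the equivalence class of $v$ modulo left translation, and it suffices to compare the two norms at the identity. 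But at the identity both are norms on the finite-dimensional space $\R^n$, so there exists a constant $C\geq 1$ with
$$C^{-1}\|v\|_{(x,t)}\leq\langle v\rangle_{(x,t)}\leq C\|v\|_{(x,t)}$$
for all $v\in T_{(x,t)}G$ and all $(x,t)\in G$.

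Second, I would deduce the equivalence of the two suprema by multilinearity. Denote by $N(\omega)$ the right-hand side of the statement. Given $v_1,\ldots,v_k$ with $\langle v_i\rangle_{(x,t)}=1$, the above inequality gives $\|v_i\|_{(x,t)}\leq C$, so setting $u_i=v_i/\|v_i\|_{(x,t)}$ (and handling $v_i=0$ trivially) we get vectors of unit Riemannian norm with
$$|\omega_{(x,t)}(v_1,\ldots,v_k)|=\|v_1\|_{(x,t)}\cdots\|v_k\|_{(x,t)}\,|\omega_{(x,t)}(u_1,\ldots,u_k)|\leq C^k|\omega|_{(x,t)}.$$
Taking the supremum gives $N(\omega)\leq C^k|\omega|_{(x,t)}$. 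Reversing the roles of the two norms, if $\|v_i\|_{(x,t)}=1$ then $\langle v_i\rangle_{(x,t)}\leq C$, and rescaling by $\langle v_i\rangle_{(x,t)}$ yields the opposite inequality $|\omega|_{(x,t)}\leq C^k N(\omega)$. This gives the claimed equivalence with constant $C^k$, uniformly in $\omega$ and $(x,t)$.

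There is no real obstacle here: the argument is the standard "operator norms with respect to equivalent vector norms are equivalent", and the only nontrivial input is the uniform (in $(x,t)$) equivalence of $\|\cdot\|_{(x,t)}$ and $\langle\cdot\rangle_{(x,t)}$, which is automatic once one observes that both norms are left-invariant.
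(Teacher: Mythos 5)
Your argument is correct and is exactly the reasoning the paper intends: it states the lemma as an immediate consequence of the left-invariance (hence uniform equivalence) of $\langle\ \rangle_{(x,t)}$ and $\|\ \|_{(x,t)}$, without writing out the rescaling step you make explicit. No difference in approach and no gaps.
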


A set of vectors in $\R^{n-1}$ is said to be $\alpha$-\textit{linearly independent} (denoted also by $\alpha$-LI) if it can be extended to a basis of the form $\bigcup_{i,j} \mathcal{B}_{ij}$,
where $\mathcal{B}_{ij}$ is a basis of $V_{ij}$.\\

\begin{lemma}\label{lemasup}
If $\omega$ is a horizontal $k$-form, then the supremum in Lemma $\ref{equiv}$ is reached on some $\alpha$-LI set.
\end{lemma}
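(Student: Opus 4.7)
The plan is to combine the alternating property of $\omega$ with a convexity argument on the unit ball of $\langle\,\cdot\,\rangle_{(x,t)}$. Since $\omega$ is horizontal, $\omega_{(x,t)}(v_1,\ldots,v_k)$ depends only on the horizontal parts of the $v_i$, and replacing each $v_i$ by its horizontal component can only decrease $\langle v_i\rangle_{(x,t)}$; so I would reduce immediately to horizontal vectors $v_i=\sum_{i',j'}v^{(i)}_{i'j'}$ with $v^{(i)}_{i'j'}\in V_{i'j'}$, for which $\langle v_i\rangle_{(x,t)}=\sum_{i',j'}\|v^{(i)}_{i'j'}\|_{(x,t)}$.

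The key observation I would exploit is that, on the horizontal subspace, $\langle\,\cdot\,\rangle_{(x,t)}$ is the $\ell^1$-sum of the Euclidean norms $\|\cdot\|_{(x,t)}$ restricted to the $V_{ij}$. A short convex-geometry check shows that the extreme points of its closed unit ball are exactly the $\|\cdot\|_{(x,t)}$-unit vectors contained in a single $V_{ij}$: any vector with nontrivial components in two distinct pieces $V_{ij}$ and $V_{i'j'}$ is a proper convex combination of its normalised components. Using compactness of the unit ball together with Bauer's maximum principle, I would then argue slot-by-slot that the convex continuous map $v_1\mapsto |\omega_{(x,t)}(v_1,v_2,\ldots,v_k)|$ achieves its maximum at an extreme point, and analogously for each subsequent entry with the others held fixed. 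This would produce a maximising tuple in which every $v_h$ lies in some single $V_{i_h j_h}$ with $\|v_h\|_{(x,t)}=1$.

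Finally, I would observe that if several of the chosen vectors lay in a common $V_{ij}$ and were linearly dependent, then multilinearity together with the alternating property of $\omega$ would force $\omega_{(x,t)}(v_1,\ldots,v_k)=0$; so whenever the supremum is nonzero the selected vectors are linearly independent inside each $V_{ij}$, and can therefore be completed to a basis $\bigcup_{i,j}\mathcal{B}_{ij}$ of the prescribed form, exhibiting $\{v_1,\ldots,v_k\}$ as an $\alpha$-LI set. The case of vanishing supremum is trivial, since any $\alpha$-LI tuple realises it.

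The only delicate point I anticipate is the clean identification of the extreme points of this mixed $\ell^1$-$\ell^2$ norm and the simultaneous optimisation in all $k$ slots via compactness; once those are in place, the rest is routine multilinear algebra.
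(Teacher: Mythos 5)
Your proposal is correct and follows essentially the same route as the paper: the paper's key step is precisely your convexity argument in bare-hands form (writing $v_1=\sum_{i,j}(v_1)_{ij}$ with $\sum_{i,j}\|(v_1)_{ij}\|_{(x,t)}=1$ and noting that one term of the resulting convex combination must dominate, then iterating slot by slot), which is Bauer's maximum principle applied to the $\ell^1$-sum ball without naming it. If anything you are more explicit than the paper on two points it leaves implicit, namely the reduction to horizontal vectors and the final linear-independence check inside each $V_{ij}$ via the alternating property.
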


In the previous lemma we can think of $\omega_{(x,t)}$ as an alternating $k$-linear map on $\R^{n-1}$.

\begin{proof}
Since the spheres for the norm $\langle\text{ }\rangle_{(x,t)}$ are compact, the supremum is reached on a set of vectors $v_1,\ldots,v_k\in\R^{n-1}$, with $\langle v_\ell\rangle_{(x,t)}=1$ for every $\ell=1,\ldots,k$. We write these vectors as in (\ref{descvect}):
$$v_\ell=\sum (v_\ell)_{ij}.$$
Then
\begin{align*}
\left|\omega_{(x,t)}(v_1,\ldots,v_k)\right|
    &=\left|\sum_{i,j}\omega_{(x,t)}((v_1)_{ij},v_2,\ldots,v_k)\right|\\
    &\leq \sum_{i,j}\|(v_1)_{ij}\|_{(x,t)}\left|\omega_{(x,t)}\left(\frac{(v_1)_{ij}}{\|(v_1)_{ij}\|_{(x,t)}},v_2,\ldots,v_k\right)\right|.
\end{align*}

Since $\langle v_1\rangle_{(x,t)}=\sum_{i,j}\|(v_1)_{ij}\|_{(x,t)}=1$, there exists a pair $(i_1,j_1)$ such that 
\begin{equation}\label{nueve}
|\omega_{(x,t)}(v_1,\ldots,v_k)|\leq\left|\omega_{(x,t)}\left(\frac{(v_1)_{i_1j_1}}{\|(v_1)_{i_1j_1}\|_{(x,t)}},v_2,\ldots,v_k\right)\right|.
\end{equation}
Observe that the vector
$u_1=\frac{(v_1)_{i_1j_1}}{\|(v_1)_{i_1j_1}\|_{(x,t)}}$ is unitary with respect to the norm $\langle\text{ }\rangle_{(x,t)}$ and belongs to $V_{i_1j_1}$. This implies that the inequality (\ref{nueve}) is in fact an equality. Continuing in this way we can construct an $\alpha$-LI set $\{u_1,\ldots,u_k\}$ that satisfies what we wanted.
\end{proof}

\medskip






\begin{lemma}\label{estimacionNormaVectores}
If $v\in V_{ij}$, there exists a positive polynomial $P_{ij}$ such that for every $(x,s)\in G$ and $t\geq 0$ we have
$$\|v\|_{(x,s+t)}\leq e^{-t\mu_i}\sqrt{P_{ij}(t)}\|v\|_{(x,s)}.$$
\end{lemma}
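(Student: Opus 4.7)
The plan is to reduce the inequality to a purely linear-algebraic estimate on the Jordan block by exploiting left-invariance of the metric and the $\alpha$-invariance of the subspaces $V_{ij}$.

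First I would unwind the definitions. Since $v \in V_{ij} \subset \R^{n-1}$ is horizontal (regarded as a tangent vector via left translation), the formula for the left-invariant metric gives
\[
\|v\|_{(x,s)} = \|e^{-s\alpha}v\|_0 \qquad \text{and} \qquad \|v\|_{(x,s+t)} = \|e^{-(s+t)\alpha}v\|_0 .
\]
Setting $w := e^{-s\alpha}v$, the subspace $V_{ij}$ is $\alpha$-invariant, hence $w \in V_{ij}$ with $\|w\|_0 = \|v\|_{(x,s)}$ and $\|v\|_{(x,s+t)} = \|e^{-t\alpha}w\|_0$. So the lemma reduces to showing the existence of a positive polynomial $P_{ij}$ with
\[
\|e^{-t\alpha}w\|_0 \leq e^{-t\mu_i}\sqrt{P_{ij}(t)}\,\|w\|_0 \qquad \text{for all } w \in V_{ij},\ t \geq 0 .
\]

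Next, I would use the Jordan structure. On $V_{ij}$ we have $\alpha = \mu_i\,\mathrm{Id} + N_{ij}$, where $N_{ij}(e^{\ell}_{ij}) = e^{\ell-1}_{ij}$ is nilpotent with $N_{ij}^{m_{ij}} = 0$. Since $\mathrm{Id}$ and $N_{ij}$ commute,
\[
e^{-t\alpha}\big|_{V_{ij}} = e^{-t\mu_i}\,e^{-tN_{ij}} = e^{-t\mu_i}\sum_{k=0}^{m_{ij}-1}\frac{(-t)^k}{k!}\,N_{ij}^k .
\]
Applying the triangle inequality in $\|\cdot\|_0$ and denoting by $\|N_{ij}\|_0$ the operator norm of $N_{ij}$, I obtain, for every $t \geq 0$,
\[
\|e^{-t\alpha}w\|_0 \leq e^{-t\mu_i}\,Q_{ij}(t)\,\|w\|_0, \qquad Q_{ij}(t) := \sum_{k=0}^{m_{ij}-1}\frac{t^k}{k!}\,\|N_{ij}\|_0^{k} .
\]

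Finally, I would manufacture an honest \emph{positive} polynomial dominating $Q_{ij}(t)^2$. By Cauchy--Schwarz,
\[
Q_{ij}(t)^2 \leq m_{ij}\sum_{k=0}^{m_{ij}-1}\frac{t^{2k}}{(k!)^2}\,\|N_{ij}\|_0^{2k} =: P_{ij}(t),
\]
and $P_{ij}$ involves only even powers of $t$ with non-negative coefficients and constant term $m_{ij} > 0$, so $P_{ij}(t) > 0$ for every $t \in \R$. Combining this with the previous bound yields exactly the desired inequality. The argument is essentially mechanical; the only mildly delicate point is ensuring $P_{ij}$ is strictly positive on all of $\R$ (not merely on $[0,+\infty)$), which is why I use even-power domination rather than squaring $Q_{ij}$ directly.
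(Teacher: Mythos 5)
Your proof is correct and follows essentially the same route as the paper: both reduce, via left-invariance and the $\alpha$-invariance of $V_{ij}$, to bounding the operator norm of the exponential of the nilpotent Jordan part by $e^{-t\mu_i}\sqrt{P_{ij}(t)}$. The only (immaterial) difference is that the paper dominates that operator norm by the Frobenius norm of the polynomial matrix entries, while you expand $e^{-tN_{ij}}$ and use the triangle inequality plus Cauchy--Schwarz.
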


\begin{proof}

Observe that for every $s\in\R$  we have $\|v\|_{(x,s)}=\|e^{-s\alpha} v\|_0=e^{-s\mu_i}\|e^{-sJ} v\|_0$, where $J$ is the $(m_{ij}\times m_{ij})$-matrix 
$$J=J(m_{ij})=\left(\begin{array}{cccc} 0   & 1     &       &   \\
                                            &\ddots &\ddots &  \\
                                            &       &\ddots & 1   \\
                                            &       &       & 0 \end{array}\right).$$
Therefore,
$$\|v\|_{(x,s+t)}=e^{-(s+t)\mu_i}\|e^{-tJ}(e^{-sJ} v)\|_0\leq e^{-(s+t)\mu_i}\left|e^{-tJ}\right|\|e^{-sJ} v\|_0=e^{-t\mu_i}\left|e^{-tJ}\right|\|v\|_{(x,s)}.$$
Here $\left|e^{-tJ}\right|$ denotes the operator norm of the matrix $e^{-tJ}$. Since all norms on $\R^{m_{ij}^2}$ are biLipschitz equivalent, there exists a constant $C_{ij}>0$, depending only on $m_{ij}$, such that
$$\left|e^{-tJ}\right|\leq C_{ij}\sqrt{\sum_{1\leq \ell,r \leq m_{ij}} a_{\ell,r}(t)^2},$$
where $a_{\ell,r}$ are the entries of $e^{-tJ}$. Notice that they are polynomials in $t$, in particular $a_{\ell,\ell}=1$ for every $\ell=1,\ldots,m_{ij}$, then the Lemma follows by taking
$$P_{ij}(t)=C_{ij}^2\sum_{1\leq \ell,r \leq m_{ij}} a_{\ell,r}(t)^2.$$
\end{proof}

\begin{lemma}\label{contraccionflujo}
If $\omega$ is a horizontal $k$-form on $G$, then there exists a positive polynomial $Q$ such that for every $t\geq 0$,
$$|\varphi_t^*\omega|_{(x,s)}\preceq e^{-tw_k}\sqrt{Q(t)}|\omega|_{(x,s+t)}.$$
\end{lemma}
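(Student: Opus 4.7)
The plan is to mimic the proof of Lemma \ref{lemaflujo}, using Lemmas \ref{equiv}, \ref{lemasup}, and \ref{estimacionNormaVectores} as the technical substitutes for the straightforward eigenvalue bookkeeping available in the diagonal case. First I would note that $\varphi_t^*\omega$ is again horizontal: the vertical field $V$ is preserved by its own flow, so $\iota_V\varphi_t^*\omega=\varphi_t^*\iota_V\omega=0$. Lemma \ref{equiv} then lets me estimate $|\varphi_t^*\omega|_{(x,s)}$ by the supremum of $|\varphi_t^*\omega_{(x,s)}(v_1,\ldots,v_k)|$ over vectors with $\langle v_h\rangle_{(x,s)}=1$, and by Lemma \ref{lemasup} this supremum is achieved on an $\alpha$-LI tuple with each $v_h\in V_{i_hj_h}$, hence with $\|v_h\|_{(x,s)}=\langle v_h\rangle_{(x,s)}=1$.

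Next, because $d\varphi_t$ acts as the identity on horizontal vectors (under the natural identification via the left-invariant frame), one has $\varphi_t^*\omega_{(x,s)}(v_1,\ldots,v_k)=\omega_{(x,s+t)}(v_1,\ldots,v_k)$. Multilinearity together with Lemma \ref{equiv} applied at $(x,s+t)$ gives
\begin{equation*}
|\omega_{(x,s+t)}(v_1,\ldots,v_k)|\preceq |\omega|_{(x,s+t)}\prod_{h=1}^k \langle v_h\rangle_{(x,s+t)}.
\end{equation*}
Since each $v_h\in V_{i_hj_h}$ with $\|v_h\|_{(x,s)}=1$, Lemma \ref{estimacionNormaVectores} yields $\langle v_h\rangle_{(x,s+t)}\leq e^{-t\mu_{i_h}}\sqrt{P_{i_hj_h}(t)}$, so
\begin{equation*}
|\varphi_t^*\omega|_{(x,s)}\preceq e^{-t(\mu_{i_1}+\cdots+\mu_{i_k})}\sqrt{P_{i_1j_1}(t)\cdots P_{i_kj_k}(t)}\,|\omega|_{(x,s+t)}.
\end{equation*}

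The main obstacle is the combinatorial step showing that $\mu_{i_1}+\cdots+\mu_{i_k}\geq w_k$. This is precisely where the $\alpha$-LI condition is used: the tuple $\{v_1,\ldots,v_k\}$ can be extended to a basis of the form $\bigcup_{i,j}\mathcal{B}_{ij}$, so at most $m_{ij}=\dim V_{ij}$ of the $v_h$'s lie in any given $V_{ij}$. Consequently the multiset $\{\mu_{i_h}\}_{h=1}^k$ is a sub-multiset of the full list of eigenvalues of $\alpha$ counted with algebraic multiplicity, whence its sum is bounded below by the sum $w_k=\lambda_1+\cdots+\lambda_k$ of the $k$ smallest such eigenvalues.

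Finally, to remove the dependence of the polynomial $P_{i_1j_1}\cdots P_{i_kj_k}$ on the point $(x,s)$ (which enters through the choice of the optimizing tuple), I would set
\begin{equation*}
Q(t)=\Bigl(\sum_{i,j} P_{ij}(t)\Bigr)^k,
\end{equation*}
which is a positive polynomial dominating every possible product $P_{i_1j_1}(t)\cdots P_{i_kj_k}(t)$ because each $P_{ij}>0$. This gives the claimed uniform bound $|\varphi_t^*\omega|_{(x,s)}\preceq e^{-tw_k}\sqrt{Q(t)}\,|\omega|_{(x,s+t)}$ for all $t\geq 0$.
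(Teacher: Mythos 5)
Your proposal is correct and follows essentially the same route as the paper: reduce to $\alpha$-LI tuples via Lemmas \ref{equiv} and \ref{lemasup}, transport the evaluation to the point $(x,s+t)$ using that $\varphi_t$ is the identity in horizontal coordinates, and control the ratio of norms with Lemma \ref{estimacionNormaVectores}. You in fact spell out two steps the paper leaves implicit — the combinatorial argument that $\mu_{i_1}+\cdots+\mu_{i_k}\geq w_k$ for an $\alpha$-LI tuple, and the uniform choice of the dominating polynomial $Q$ — which is a welcome clarification rather than a deviation.
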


\begin{proof}
Using Lemmas \ref{equiv} and \ref{lemasup} we have
\begin{align*}
&|\varphi_t^*\omega|_{(x,t)} \asymp \max\left\{\left|\varphi_t^*\omega_{(x,s)}\left(\frac{v_1}{\|v_1\|_{(x,s)}},\ldots,\frac{v_k}{\|v_k\|_{(x,s)}}\right)\right| : \{v_1,\ldots,v_k\}\text{ is }\alpha\text{-LI}\right\}\\
    &=\max\left\{\prod_{\ell=1}^k\frac{\|v_\ell\|_{(x,s+t)}}{\|v_\ell\|_{(x,s)}}\left|\omega_{(x,s+t)}\left(\frac{v_1}{\|v_1\|_{(x,s+t)}},\ldots,\frac{v_k}{\|v_k\|_{(x,s+t)}}\right)\right| : \{v_1,\ldots,v_k\}\text{ is }\alpha\text{-LI}\right\}
\end{align*}
Suppose that $v_\ell\in V_{i_\ell j_\ell}$ for every $\ell=1,\ldots,k$, then by Lemma \ref{estimacionNormaVectores} and the fact that we are considering $\alpha$-LI sets we obtain
$$|\varphi_t^*\omega|_{(x,t)}\preceq e^{-tw_k}\sqrt{Q(t)}|\omega|_{(x,s+t)},$$
where $Q=\prod_{ij}(P_{ij})^k$.
\end{proof}

\begin{proposition}\label{proposicionLpHeintzeAnulacion} Let $k=2,\ldots,n$, then $L^p H^k(G,\infty)=0$ for all $p>\frac{\mathrm{tr}(\alpha)}{w_{k-1}}$.
\end{proposition}

\begin{proof} 
Take $\omega$ a closed form in $L^p\Omega^k(G,\infty)$. We want to construct an $L^p$-integrable differential $(k-1)$-form  $\vartheta$ such that $d\vartheta=\omega$ and $\vartheta =0$ on some neighborhood of $\infty$. By Remark \ref{suavizar} this will imply $L^pH^k(G,\infty)=0$.

Set
\begin{equation}\label{primitiva}
\vartheta=-\int_0^{+\infty}\varphi_t^*\iota_V\omega\ dt.    
\end{equation}
Observe that, since $\omega$ vanishes on a neighborhood of $\infty$, the above integral converges pointwise, thus $\vartheta$ is well-defined as a $k$-form. Furthermore, it is clear that $\vartheta$ is zero on some neighborhood of $\infty$. 

Since $\iota_V\omega$ is a horizontal form, by Lemma \ref{contraccionflujo} we have that for all $(x,s)\in G$ and $t\geq 0$,
$$|\varphi_t^*\iota_V\omega|_{(x,s)}\leq e^{-tw_k}\sqrt{Q(t)}|\iota_V\omega|_{(x,s+t)},$$
for some positive polynomial $Q$. Thus
\begin{align*}
\|\varphi_t^*\iota_V\omega\|_{L^p}^p
\leq \int_G e^{-t(pw_k-\mathrm{tr}(\alpha))} \sqrt{Q(t)} |\iota_V\omega|^p_{(x,s+t)}e^{-(s+t\mathrm{tr}(\alpha))}dxds=e^{-t\epsilon}\sqrt{Q(t)}\|\iota_V\omega\|^p_{L^p}
\end{align*}
where $\epsilon=pw_k-\mathrm{tr}(\alpha)>0$. It is easy to see that $|\iota_V\omega|_{(x,s)}\leq |\omega|_{(x,s)}$ for every $(x,s)\in G$, so $\|\varphi_t^*\iota_V\omega\|_{L^p}\leq Ce^{-t\epsilon}\|\omega\|_{L^p}$. This implies that the integral (\ref{primitiva}) converges in $L^p(M,\Lambda^{k-1})$. We have to prove that it is smooth and $d\vartheta=\omega$.

We know that there exists $T\in\R$ such that $\iota_V\omega_{(x,s)}=0$ for all $s\geq T$, then $\vartheta_{(x,s)}$ is an integral on a compact interval for every $(x,s)\in M$. Since $(x,s,t)\mapsto \varphi_t^*\iota_V\omega$ is smooth we can use Lemma \ref{Leibniz} to see that $\vartheta$ is in $\Omega^{k-1}(M)$ and 
$$d\vartheta=-\int_0^{+\infty}d(\varphi_t^*\iota_V\omega) dt$$

The Lie derivative of $\omega$ with respect to the vertical field $V$ is
$L_V\omega=\left.\frac{d}{dt}\right|_{t=0}\varphi_t^*\omega$.
Observe that $\frac{d}{dt}\varphi_t^*\omega=\varphi_t^*L_V\omega$. Thus, using the Cartan formula $L_V\omega=d\iota_V\omega+\iota_V d\omega$ (see for example \cite[Chapter I,Section A]{GHL}) and that $\omega$ is closed, we obtain
\begin{align*}
\varphi^*_t\omega-\omega
    &= \int_0^t \frac{d}{ds}\varphi_s^*\omega\ ds
    = \int_0^t \varphi_s^*(d\iota_V\omega+\iota_Vd\omega)ds
    = \int_0^t d(\varphi_s^*\iota_V\omega)ds.
\end{align*}
For every $(x,r)\in G$ we have
$$\omega_{(x,r)}=\lim_{t\to+\infty}\left(\varphi^*_t\omega_{(x,r)}-\int_0^t d(\varphi_s^*\iota_V\omega)_{(x,r)}ds\right).$$
The limit exists because the expression in brackets is constant for $t$ big enough. Then we conclude 
$$\omega_{(x,r)}=-\int_0^{+\infty} d(\varphi_s^*\iota_V\omega)_{(x,r)}ds =d\vartheta_{(x,r)}$$
for all $(x,t)\in G$, which finishes the proof.
\end{proof}

We prove the second part of Theorem \ref{TeoremaLpHeintzeCalculo} by studying two cases separately. 

\begin{proposition}\label{proposicionLpHeintzeNoAnulacion} Let $k=1,\ldots,n-1$, then $L^p H^k(G,\infty)\neq 0$ for
$\frac{\mathrm{tr}(\alpha)}{w_k}<p<\frac{\mathrm{tr}(\alpha)}{w_{k-1}}$. In the case $k=1$ we read $\frac{\mathrm{tr}(\alpha)}{w_0}=+\infty$.
\end{proposition}

\begin{proof}
We want to construct a closed differential $k$-form $\omega$ on $G$ which represents a non-zero class in $L^p H^k(G,\infty)$. We work again with the complex $(L^p\Omega^*(G,\infty),d)$. The strategy of this proof is inspired by the duality ideas mentioned above; that is: we will give a $(n-k)$-form $\beta\in L_{\mathrm{loc}}^{q}\Omega^{n-k}(G,\infty)$, with $\frac{1}{p}+\frac{1}{q}=1$, such that
\begin{enumerate}
    \item[(j)] $\nu_\beta(\omega)=\int_G\omega\wedge\beta\neq 0$, and
    \item[(k)] $dL^p\Omega^{k-1}(G,\infty)\subset\Ker \nu_\beta;$
\end{enumerate}
which shows that $\omega$ represents a non-zero element in $L^p H^k(G,\infty)$.

Consider two smooth functions $f:\R^{n-1}\to [0,1]$ and $g:\R\to [0,1]$ such that $\supp(f)$ is compact, $g(t)=0$ for all $t\geq 1$ and $g(t)=1$ for all $t\leq 0$. Then define
\begin{equation}\label{FormaNoNula}
\omega_{(x,t)}=d\left(f(x)g(t)\ dx_1\wedge\ldots\wedge dx_{k-1}\right).    
\end{equation}
Using triangular inequality we have
$$\|\omega\|_{L^p}\leq \|f g'\ dt\wedge dx_1\wedge\cdots\wedge dx_{k-1}\|_{L^p}+\sum^{n-1}_{j=k}\left\|\frac{\partial f}{\partial x_j} g\ dx_j\wedge dx_1\wedge\cdots\wedge dx_{k-1}\right\|_{L^p}.$$

The first term is finite because $fg'$ is smooth and has compact support. Then it is enough to show that for every $j=k,...,n-1$ the form $\omega_j=\frac{\partial f}{\partial x_j} g\ dx_j\wedge dx_1\wedge\cdots\wedge dx_{k-1}$ belongs to $L^p$. By Lemma \ref{lemaformasgeneral} there exists a positive polynomial $P_j$ such that 
$$\|\omega_j\|_{L^p}^p\preceq \left\|\frac{\partial f}{\partial x_j}\right\|_{L^p}^p \int^1_{-\infty}e^{t(p(w_{k-1}+\lambda_j)-\mathrm{tr}(\alpha))}P_j(t)^{\frac{p}{2}}dt.$$
Hence $\|\omega_j\|_{L^p}<+\infty$ if $p>\frac{\mathrm{tr}(\alpha)}{w_{k-1}+\lambda_j}$ for every $j=k,\ldots,n-1$, which implies $\|\omega\|_{L^p}<+\infty$ for every $p>\frac{\mathrm{tr}(\alpha)}{w_k}$.

Define $\beta=dx_k\wedge\ldots\wedge dx_{n-1}$. To prove that $\beta$ is in $L_{\mathrm{loc}}^q\Omega^{n-k}(G,\infty)$ it is enough to show that it is $q$-integrable on $Z=B_R\times (-\infty,T)$ for every ball  $B_R=B_R(0,R)\subset\R^{n-1}$ and $T\in\R$. By Lemma \ref{lemaformasgeneral} there exists a positive polynomial $P$ such that  
\begin{align*}
\|\beta\|_{L^q,Z}^q   
    \leq\Vol(B_R)\int^T_{-\infty}e^{t(q(\lambda_k+\cdots+\lambda_{n-1})-\mathrm{tr}(\alpha))}P(t)^{\frac{q}{2}}dt.
\end{align*}
This integral converges if and only if $q>\frac{\mathrm{tr}(\alpha)}{\lambda_k+\cdots+\lambda_{n-1}}$, or equivalently $p<\frac{\mathrm{tr}(\alpha)}{w_{k-1}}$.

We now prove (j): Let $B_{R_1}\in\R^{n-1}$ be a ball such that $\supp(f)\subset B_{R_1}$. For $t<1$ consider $Z_t=B_{R_1}\times [t,1]$. Since $|\omega\wedge\beta|$ is in $L^1(G)$ because of Hölder's inequality, we have by Stokes' theorem:  
\begin{align*}
\int_G \omega\wedge\beta 	&=\lim_{t\to-\infty}\int_{Z_t} d(fg\ dx_1\wedge\cdots\wedge dx_{n-1})
	=\int_{B_{R_1}}f\ dx_1\wedge\cdots\wedge dx_{n-1}\neq 0.
\end{align*}

In order to prove (k) we take $\vartheta\in L^p\Omega^{k-1}(G,\infty)$. There exist two constant $R_2,T_2>0$ such that the support of $\vartheta$ is contained in $B_{R_2}\times (-\infty,T_2]$. By Stokes' theorem
$$\nu_\beta(d\vartheta)=\int_G d\vartheta\wedge\beta = \lim_{t\to-\infty} \int_{B_{R_2}\times [t,T_2]}d\vartheta\wedge\beta= \lim_{t\to-\infty}\int_{B_{R_2}\times \{t\}} \vartheta\wedge\beta.$$
In the second equality we use again that $|d\vartheta\wedge\beta|$ is in $L^1(G)$. Suppose that $\nu_\beta(d\vartheta)\neq 0$, then there exist $\epsilon>0$ and $t_0$ such that for all $t\leq t_0,$
\begin{equation}\label{referencia1}
\left|\int_{B_{R_2}\times \{t\}} \vartheta\wedge\beta\right| >\epsilon.
\end{equation}

For $I=(i_1,\ldots,i_{k-1},j_1,\ldots,j_{k-1},\ell_1,\ldots,\ell_{k-1})\in\mathcal{I}_{k-1}$ we consider
\begin{align*}
(\tilde{\upsilon}_I)_{(x,t)} &=(L_{(x,t)}^{-1})^*dx_I=(L_{(x,t)}^{-1})^*dx_{i_1j_1}^{\ell_1}\wedge\cdots\wedge (L_{(x,t)}^{-1})^*dx_{i_{k-1}j_{k-1}}^{\ell_{k-1}}\\
    &=e^{-tw_I}\left(\sum_{h=0}^{M_1}\frac{(-t)^h}{h!}dx_{i_1j_1}^{\ell_i+h}\right)\wedge\cdots\wedge\left(\sum_{h=0}^{M_{k-1}}\frac{(-t)^h}{h!}dx_{i_{k-1}j_{k-1}}^{\ell_{k-1}+h}\right),
\end{align*}
where $M_s=m_{i_sj_s}-\ell_s$. We define $(\upsilon_I)_{(x,t)}=e^{tw_I}(\tilde{\upsilon}_I)_{(x,t)}$ and write
$\vartheta=\sum_{I\in\mathcal{I}_{k-1}}a_I \upsilon_I$.
Observe that $|\upsilon_I|_{(x,t)}\asymp e^{tw_I}$ for every $(x,t)\in G$.

Since $\{\upsilon_I:I\in\mathcal{I}_{k-1}\}$ is orthogonal at every point with respect to $\langle\langle\text{ , }\rangle\rangle_{(x,t)}$, then $[\vartheta]_{(x,t)}\geq [a_I \upsilon_I]_{(x,t)}$ for every $I\in\Delta_{k-1}$ and as a consequence $|\vartheta|_{(x,t)}\succeq |a_I \upsilon_I|_{(x,t)}$.

We can easily observe that 
$$\int_{B_{R_2}\times\{t\}}\vartheta\wedge\beta=\int_{B_{R_2}\times\{t\}}a_{I_0} dx_1\wedge\cdots\wedge dx_{n-1},$$ 
where $I_0$ is such that $dx_{I_0}=dx_1\wedge\ldots\wedge dx_{k-1}$. Hence, inequality \eqref{referencia1} means that there exist $\epsilon>0$ and $t_0$ such that for every $t\leq t_0$,
$$\left|\int_{B_R\times\{t\}}a_{I_0}(x,t) dx \right|>\epsilon.$$
Now we have
\begin{align*}
\|\vartheta\|_{L^p}^p
    &\succeq \int_G |a_{I_0} \upsilon_{I_0}|^p_{(x,t)}dV_{(x,t)}\\
    &\succeq\int_{-\infty}^{t_0}\left(\int_{B_R}|a_{I_0}(x,t)|^p dx\right)e^{t(pw_{k-1}-\mathrm{tr}(\alpha))}dt\\
    &\succeq \epsilon^p\int_{-\infty}^{t_0}e^{t(pw_{k-1}-\mathrm{tr}(\alpha))}dt =+\infty.
\end{align*}
This contradiction proves that $\nu_\beta(d\vartheta)=0$.
\end{proof}

\begin{proposition}\label{proposicionLpHeintzeCritico}
If $p=\frac{\mathrm{tr}(\alpha)}{w_{k-1}}$ with $k=2,\ldots,n-1$, then $L^p H^k(G,\infty)\neq 0$. 
\end{proposition}

\begin{proof}
We consider $\omega$ and $\beta$ as in the proof of Proposition \ref{proposicionLpHeintzeNoAnulacion}. The main difficulty to apply the previous argument in this case is that $\beta$ does not belong to $L^q_{\mathrm{loc}}\Omega^{n-k}(G,\infty)$, then $\nu_\beta$ is not well-defined. An alternative is to consider the function $$\tilde{\nu}_\beta:L^p\Omega^k(G,\infty)\to [0,+\infty],\ \tilde{\nu}_\beta(\varpi)=\liminf_{t\to-\infty}\left|\int_{\R^{n-1}\times [t,+\infty)}\varpi\wedge\beta\right|,$$
which is well-defined because $\supp(\varpi)\cap(\R^{n-1}\times [t,+\infty))$ is compact for every $t\in\R$.

It is clear that
$$\tilde{\nu}_\beta(\omega)=\int_{\R^{n-1}}f(x)\ dx\neq 0.$$
Furthermore we can show using the above argument that $\tilde{\nu}_\beta(d\vartheta)=0$ for all $\vartheta\in L^p\Omega^{k-1}(G,\infty)$. This implies that $\omega$ represents a non-zero class in the relative $L^p$-cohomology of $(G,\infty)$.
\end{proof}

Theorem \ref{TeoremaLpHeintzeCalculo} is obtained from Propositions \ref{proposicionLpHeintzeAnulacion}, \ref{proposicionLpHeintzeNoAnulacion} and \ref{proposicionLpHeintzeCritico}.

\section{Relative $L^p$-cohomology in degree one and Besov spaces}

There exists a direct relation between $\ell^p H^1(X)$ and $\ell^p H^1(X,\xi)$ for a Gromov-hyperbolic simplicial complex $X$ with bounded geometry and $\xi\in\partial X$. Indeed, if $\theta$ is a closed $1$-cochain in $\ell^p (X^{(1)},\xi)$, then it is zero in $\ell^p H^1(X,\xi)$ if and only if it is zero in $\ell^p H^1 (X)$. This is because if $f$ is a function in $\ell^p (X^{(0)})$ such that $\delta f=\theta$, then $f$ must be constant (and hence zero) in a neighborhood of $\xi$. Thus, there is a canonical injection $\ell^p H^1(X,\xi)\hookrightarrow \ell^p H^1(X)$. Combining this fact with Theorem \ref{equivalencia} and \cite[Theorem 3]{GKS88} we obtain the first part of Theorem \ref{TeoGr1}.

By \cite{BP}, if $X$ is a Gromov-hyperbolic simplicial complex with bounded geometry such that there exists an Ahlfors regular metric $d$ on the conformal gauge of the visual boundary $\partial X$, then for every $k\in\N$ and $p\geq 1$ the cohomology space $\ell^pH^1(X)$ is isomorphic to the Besov space
$$B_p(\partial X)=\bigl\{u:\partial X : \|u\|_{B_p}<+\infty\bigr\}/\R,$$
where 
$$\|u\|_{B_p}=\left(\int_{Z\times Z} \frac{|u(\xi)-u(\eta)|^p}{d(\xi,\eta)^{2Q}} d\mathcal{H}(\xi)d\mathcal{H}(\eta)\right)^{1/p},$$
and $\R$ indicates the spaces of almost everywhere constant functions. Here $Q$ is the Hausdorff dimension of $(\partial X,d)$ and $\mathcal{H}$ is the corresponding Hausdorff measure.

Remember that $(\partial X,d)$ is \textit{Ahlfors regular} of dimension $Q$ if there exists a constant $K$ such that for every $\xi\in \partial X$ and $r>0$,
$$K^{-1}r^{Q}\leq \mathcal{H}(B(\xi,r))\leq K r^{Q}.$$
And a metric $d$ is in the \textit{conformal gauge} of the visual boundary $\partial X$ if it is cuasi-symmetric equivalent to any visual metric. See \cite[Chapter 15]{H} for more details. 

Combining this identification with the previous observation we can identify the space $\ell^pH^1(X,\xi)$ (where $\xi\in \partial X$) with
$$B_p(\partial X,\xi)=\bigl\{u:\partial X\to\R : \|u\|_{B_p}<+\infty\text{ and }u\equiv\text{cte on a neighborhood of }\xi\bigr\}/\R.$$

Sometimes it is convenient to consider the Besov algebra
$$A_p(\partial X)=\bigl\{u:\partial X\to\R : u\text{ is continuous and }\|u\|_{B_p}<+\infty\bigr\},$$
which is a Banach algebra with the norm 
$\|\ \|=\|\ \|_{\infty}+\|\ \|_{B_p}$,
and its maximal ideals are
$$\mathcal{I}_\xi=\bigl\{u\in A_p(\partial X): u(\xi)=0\bigr\}.$$
As before, we can consider the relative Besov algebra
$$A_p(\partial X,\xi)=\bigl\{u\in A_p(\partial X) : u\equiv 0\text{ on a neighbourhood of }\xi\bigr\}\subset \mathcal{I}_\xi.$$
All these definitions can be done for general compact metric spaces (not only for boundaries of Gromov-hyperbolic spaces). 

We will prove the following result:

\begin{theorem}\label{densidadBesov}
Let $(Z,d)$ be a compact  Ahlfors regular  metric space of dimension $Q>0$  and $p> Q$. Suppose that $z_0\in Z$ satisfies that there exists $R_0>0$ such that for every $R\in (0,R_0]$ the sets $\overline{B(z_0,R)}$ and $B(z_0,R)^c$ are continua. Then
\begin{enumerate}
    \item[(i)] $B_p(Z,z_0)$ is dense in $B_p(Z)$.
    \item[(ii)] $A_p(Z,z_0)$ is dense in $\mathcal{I}_{z_0}$.
\end{enumerate}
\end{theorem}

If $X$ is a Gromov-hyperbolic simplicial complex with bounded geometry such that the conformal gauge of $\partial X$ has a metric satisfying the conditions of Theorem \ref{densidadBesov}, then $\ell^p H^1(X,\xi)$ is dense in $\ell^p H^1(X)$ for every $\xi\in\partial X$ and $p>Q$. The same result is true, because of Theroem \ref{equivalencia}, in the case of Riemannian manifolds with bounded geometry. This proves the second part of Theorem \ref{TeoGr1}.

Let $C$ and $D$ be two disjoint non-degenerated continua in a metric space $(Z,d)$. Their \textit{relative distance} is
$$\Delta(C,D)=\frac{dist(C,D)}{\min\{\mathrm{diam}(C),\mathrm{diam}(D)\}},$$
and their \textit{Besov capacity} (as defined in \cite{B06}) is
\begin{equation}\label{Capacitancia}
\Omega_p(C,D)=\inf\left\{\|u\|_{B_p}^p : u\in A_p(Z), u|_{C}\leq 0\text{ and }u|_{D}\geq 1\right\}.    
\end{equation}

Observe that if $u\in A_p(Z)$ is a function such that $u|_C\leq 0$ and $u|_D\geq 1$, then we can take $\tilde{u}$ defined by $\tilde{u}(z)=0$ if $u(z)\leq 0$, $\tilde{u}(z)=1$ if $u(z)\geq 1$ and $\tilde{u}(z)=u(z)$ otherwise. Since $|\tilde{u}(x)-\tilde{u}(y)|\leq |u(x)-u(y)|$ for every $x,y\in Z$, we have $\|\tilde{u}\|_{B_p}\leq \|u\|_{B_p}$. Hence, in \eqref{Capacitancia} we can take the infimum among functions taking values in $[0,1]$.

The following theorem will be key to prove Theorem \ref{densidadBesov}:

\begin{theorem}[\cite{B06}]\label{TeoMarc}
Let $(Z,d)$ be a compact Ahlfors regular metric space of dimension $Q$ and $p>Q$. Then there exist two decreasing homeomorphisms $\varphi,\psi:(0,+\infty)\to (0,+\infty)$ such that for two non-degenerated disjoing continua $C,D\subset Z$,
$$\varphi\bigl(\Delta(C,D)\bigr)\leq \Omega_p(C,D)\leq\psi\bigl(\Delta(C,D)\bigr).$$
\end{theorem}

If $[u]\in B_p(Z)$, the function 
$$F_u(x,y)=\frac{|u(x)-u(y)|^p}{d(x,y)^{2Q}}$$ 
belongs to $L^1(\mathcal{H}\times \mathcal{H})$, thus
we can consider on $Z\times Z$ a measure given by $d\nu_u=F_u d\mathcal{H}d\mathcal{H}$. We will use this measure in the following lemma and in the proof of Theorem \ref{densidadBesov}.

\begin{lemma}\label{lemaBesovAcotadas}
Let $Z$ be as above, thus the set
$$B_p^\infty(Z)=\bigl\{[u]\in B_p(Z) : u\in L^\infty(Z)\bigr\}$$
is dense in $B_p(Z)$.
\end{lemma} 

\begin{proof}
Let $[u]\in B_p(Z)$. For every $n\in\N$ we consider
$$X_n=\{z\in Z : |u(z)|>n\}.$$
The sequence $\{X_n\}$ is decreasing, $\Hh(X_n)<+\infty$ and $\bigcap_n X_n=\emptyset$. We define
$$u_n(z)=\left\{\begin{array}{cc} u(z) & \text{ if }z\notin X_n \\ \frac{u(z)n}{|u(z)|}  & \text{ if }z\in X_n \end{array}\right.$$
and $v_n=u-u_n$. 

Observe that $u_n\in L^\infty(Z)$, $|v_n(x)-v_n(y)|\leq |u(x)-u(y)|$ for every $x,y\in Z$ and $|v_n(x)-v_n(y)|=0$ if $x,y\notin X_n$. Therefore, decomposing 
$$Z\times Z=(Z\times X_n)\cup (X_n\times Z)\cup (X_n^c\times X_n^c)$$ 
and using symmetry we have
\begin{align*}
\|v_n\|_{B_p}^p &\leq  2\int_{Z\times X_n} \frac{|v_n(x)-v_n(y)|^p}{d(x,y)^{2Q}}\,d\mathcal{H}(x)d\mathcal{H}(y)\\
&\leq 2\int_{Z\times X_n} \frac{|u(x)-u(y)|^p}{d(x,y)^{2Q}}\,d\mathcal{H}(x)d\mathcal{H}(y)=2\nu_u(Z\times X_n)\to 0. 
\end{align*}
\end{proof}

\begin{proof}[Proof of Theorem \ref{densidadBesov}]
By Lemma \ref{lemaBesovAcotadas}, to prove $(i)$ we need to show that every element $[u]\in B^\infty_p(Z)$ can be approximated by elements of $B_p(Z,z_0)$. 

We assume that $R_0$ also satisfies $$\diam \bigl(B(z_0,R_0)^c\bigr)\geq \diam \bigl(B(z_0,R_0)\bigr)\geq R_0.$$ 
For every $R\in (0,R_0]$ we denote $B_R=B(z_0,R)$, then for every $r\in (0,R)$,
$$\Delta\bigl(B_R^c,\overline{B_r}\bigr)\leq \frac{R}{r}$$
Using Theorem \ref{TeoMarc} we have
$$\Omega_p\bigl(B_R^c,\overline{B_r}\bigr)\leq \psi\left(\frac{R}{r}\right)\to 0, \text{ when }r\to 0.$$
Hence, for every $R\in (0,R_0]$ we can take $v_R\in A_p(Z,z_0)$ such that $\|v_R\|_{B_p}\leq R$, $v_R(z)\in [0,1]$ for every $z\in Z$ and $v_R(z)=1$ for every $z\notin B_R$. Then we consider $u_R(z)=u(z)v_R(z)$, let us prove that $\|u-u_R\|_{B_p}\to 0$ when $R\to 0$.
\begin{align*}
\|u-u_R\|^p_{B_p}   &\leq 2\int_{Z\times B_R}\frac{|(1-v_R(x))u(x)-(1-v_R(y))u(y)|^p}{d(x,y)^{2Q}}\,d\mathcal{H}(x)d\mathcal{H}(y) \\
& \preceq  \int_{Z\times B_R} |u(x)|^p\frac{|(1-v_R(x))-(1-v_R(y))|^p}{d(x,y)^{2Q}} \,d\mathcal{H}(x)d\mathcal{H}(y)\\
&+\int_{Z\times B_R}  |1-v_R(y)|^p\frac{|u(x)-u(y)|^p}{d(x,y)^{2Q}}\, d\mathcal{H}(x)d\mathcal{H}(y)\\
&\leq \|u\|^p_\infty\|v_R\|_{B_p}^p+\|1-v_R\|_\infty^p\nu_u(Z\times B_R).
\end{align*}
In the second line we added and subtracted $(1-v_R(y))u(x)$ and then applied Jensen's inequality. Since $u$ and $(1-v_R)$ are bounded and $\nu_u(Z\times \{z_0\})=0$ (because $Z$ is Ahlfors regular), the expression in the last line converges to $0$ when $R\to 0$, which proves $(i)$.

If $u\in\mathcal{I}_{z_0}$, then $u_R\in A_p(Z,z_0)$ for every $R\in (0,R_0]$. The previous calculation shows that $\|u-u_R\|_{B_p}\to 0$ when $R\to 0$. In addition we have
$$\|u-u_R\|_\infty\leq \sup\bigl\{|u(z)| : z\in B_R\bigr\},$$
which converges to $0$ when $R\to 0$ because $u$ is continuous and $u(z_0)=0$. This proves $(ii)$.
\end{proof}

\section*{Acknowledgments} 

Most of this work is a part of my thesis at \textit{Universidad de la República} and \textit{Université de Lille}, supported by both institutions. I am deeply grateful to my advisors Marc Bourdon and Matías Carrasco for shearing ideas with me and helping me correct the manuscript. I also thank Yves Cornulier for some helpful suggestions.


\begin{thebibliography}{11}

\bibitem{A}
\textsc{O. Attie.}
\newblock{\em Quasi-isometry classification of some manifolds of bounded geometry.}
Math. Z., {\bf 216}:501--527, 1994.

\bibitem{B06}
\textsc{M. Bourdon.}
\newblock{\em An algebraic characterization of quasi-möbius homeomorphisms.}
Ann. Acad. Sci. Fenn. Math., {\bf 32}:235--250, 2007.

\bibitem{BP}
\textsc{M. Bourdon and H. Pajot.}
\newblock{\em Cohomologie $l_p$ et espaces de Besov.}
J. Reine Angew Math., {\bf 558}:85--108, 2003.

\bibitem{C}
\textsc{M. Carrasco Piaggio.}
\newblock{\em Orlicz spaces and the large scale geometry of Heintze groups.}
Math. Ann., {\bf 368}:433--481, 2017.

\bibitem{CS}
\textsc{M. Carrasco Piaggio and E. Sequeira.}
\newblock{\em On quasi-isometry invariants associated to a Heintze group.}
Geom. Dedicata, {\bf 189}(1):1--16, 2017.

\bibitem{Cor}
\textsc{Y. Cornulier.}
On   the   quasi-isometric   classification   of   locally   compactgroups. (New Directions in Locally Compact Groups, 275--342.)
\emph{Cambridge University Press}, 2018.

\bibitem{GHL}
\textsc{S. Gallot, D. Hulin and J. Lafontaine.}
Riemannian Geometry.
\newblock{\em Sringer-Verlag}, 1989.

\bibitem{G}
\textsc{L. Genton.}
\newblock{\em Scaled Alexander-Spanier cohomology and Lqp cohomology for metric spaces.}
Thesis 2014.

\bibitem{GH}
\textsc{E. Ghys and P. de la Harpe.}
Sur les groupes hyperboliques d'après Mikhael Gromov.
\newblock{\em Progress in Mathematics, vol 83. Birkhäuser Boston}
, 1990.

\bibitem{GKS2}
\textsc{V.M. Gol'dshtein, V.I. Kuz'minov and I.A. Shvedov.}
\newblock{\em Dual spaces of spaces of differential forms.}
Sib. Math. J., {\bf 27}(1):35--44, 1986.

\bibitem{GKS88}
\textsc{V.M. Gol'dshtein, V.I. Kuz'minov and I.A. Shvedov.}
\newblock{\em The de Rham isomorphism of $l_p$-cohomology of non compact Riemannian manifold.}
Sib. Math. J., {\bf 29}(2):190--197, 1988.

\bibitem{GT98}
\textsc{V.M. Gol'dshtein, M. Troyanov}
\newblock{\em The $L_{q,p}$-cohomology of SOL.}
Ann. Fac. Sci. Touluse, {\bf 7}:687--698, 1998.

\bibitem{GT06}
\textsc{V.M. Gol'dshtein, M. Troyanov}
\newblock{\em Soboler inequalities for differential forms and $L_{q,p}$-cohomolgy.}
J. Geo. Anal., {\bf 16}(4):597--631, 2016.

\bibitem{GT10}
\textsc{V.M. Gol'dshtein, M. Troyanov}
\newblock{\em A short proof of the Hölder-Poincaré duality for $l_p$-cohomology.}
Rend. Semin. mat. R. Univ. Padova., {\bf 124}(1):179--184, 2010.

\bibitem{H}
\textsc{J. Heinonen.}
\newblock{\em Lectures on analysis on metric spaces.}
Universitext. 
Springer-Verlag, New York, 2001.

\bibitem{Heintze74}
\textsc{E. Heintze.}
\newblock{\em On homogeneous manifolds of negative curvature.}
Math. Ann., {\bf 211}:23--34, 1974.

\bibitem{LX}
\textsc{E. Le Donne and X. Xie.}
\newblock{\em Rigidity of fiber-preserving quasisymmetric maps.}
Rev. Mat. Iberoam., {\bf 32}(4):1407--1422, 2016.

\bibitem{MT}
\textsc{J. Mackay and J. Tyson}
Conformal Dimension: Theory and Application, \emph{(University Lecture Series; Vol. 54) American Mathematical Society, 2010.}

\bibitem{Pa89a}
\textsc{P. Pansu.}
\newblock{\em Métriques de Carnot-Carathéodory et quasiisométries des espaces symétriques de rang un.}
Ann. of Math. (2),  {\bf 129}(1):1--60, 1989.

\bibitem{Pa95}
\textsc{P. Pansu.}
\newblock{\em Cohomologie $L^p$: invariance sour quasiisométries.}
Preprint: https://www.math.u-psud.fr/~pansu/liste-prepub.html, 1995.

\bibitem{Pa08}
\textsc{P. Pansu.}
\newblock{\em Cohomologie $L^p$ et piencement.}
Comment. Math. Helv., {\bf 83}(2):327--357, 2008.

\bibitem{W}
\textsc{H. Whitney.}
Geometric integration theory.
\newblock{\em Dover Publications, Inc.,} 1957.

\bibitem{Xie14}
\textsc{X. Xie.}
\newblock{\em Large scale geometry of negative curved $\R^n\rtimes\R$.}
Geom. Topol., {\bf 18}(2):831--872, 2014.


\end{thebibliography}

\medskip
\medskip

\address

\end{document}